\def\@cite#1#2{{\m@th\upshape\bfseries%
[{#1\if@tempswa{\m@th\upshape\mdseries, #2}\fi}]}}
\theoremstyle{plain}
\newtheorem{thm}{Theorem}[section]
\newtheorem{cor}[thm]{Corollary}
\newtheorem{prop}[thm]{Proposition}
\newtheorem{lem}[thm]{Lemma}
\theoremstyle{definition}
\newtheorem{eg}[thm]{Example}
\newtheorem{conj}[thm]{Conjecture}
\theoremstyle{remark}
\newtheorem{rem}[thm]{Remark}
\numberwithin{equation}{subsection}
\renewcommand{\bold}[1]{\medskip \noindent {\bf #1 }\nopagebreak}
\newcommand{\nc}{\newcommand}
\newcommand{\rnc}{\renewcommand}
\newcommand{\oQ}{\overline{\mathbb{Q}}_\bR}
\newcommand{\bk}{{\mathbf{k}}}
\newcommand{\bg}{{\mathbf{g}}}
\nc\bA{\mathbb{A}}
\nc\bB{\mathbb{B}}
\nc\bC{\mathbb{C}}
\nc\bD{\mathbb{D}}
\nc\bE{\mathbb{E}}
\nc\bF{\mathbb{F}}
\nc\bG{\mathbb{G}}
\nc\bH{\mathbb{H}}
\nc\bI{\mathbb{I}}
\nc{\bJ}{\mathbb{J}} 
\nc\bK{\mathbb{K}}
\nc\bL{\mathbb{L}}
\nc\bM{\mathbb{M}}
\nc\bN{\mathbb{N}}
\nc\bO{\mathbb{O}}
\nc\bP{\mathbb{P}}
\nc\bQ{\mathbb{Q}}
\nc\bR{\mathbb{R}}
\nc\bS{\mathbb{S}}
\nc\bT{\mathbb{T}}
\nc\bU{\mathbb{U}}
\nc\bV{\mathbb{V}}
\nc\bW{\mathbb{W}}
\nc\bY{\mathbb{Y}}
\nc\bX{\mathbb{X}}
\nc\bZ{\mathbb{Z}}
\nc\cA{\mathcal{A}}
\nc\cB{\mathcal{B}}
\nc\cC{\mathcal{C}}
\rnc\cD{\mathcal{D}}
\nc\cE{\mathcal{E}}
\nc\cF{\mathcal{F}}
\nc\cG{\mathcal{G}}
\rnc\cH{\mathcal{H}}
\nc\cI{\mathcal{I}}
\nc{\cJ}{\mathcal{J}} 
\nc\cK{\mathcal{K}}
\rnc\cL{\mathcal{L}}
\nc\cM{\mathcal{M}}
\nc\cN{\mathcal{N}}
\nc\cO{\mathcal{O}}
\nc\cP{\mathcal{P}}
\nc\cQ{\mathcal{Q}}
\rnc\cR{\mathcal{R}}
\nc\cS{\mathcal{S}}
\nc\cT{\mathcal{T}}
\nc\cU{\mathcal{U}}
\nc\cV{\mathcal{V}}
\nc\cW{\mathcal{W}}
\nc\cY{\mathcal{Y}}
\nc\cX{\mathcal{X}}
\nc\cZ{\mathcal{Z}}
\nc{\dmo}{\DeclareMathOperator}
\rnc{\Re}{\operatorname{Re}}
\rnc{\Im}{\operatorname{Im}}
\dmo{\rank}{rank}
\dmo{\End}{End}
\dmo{\Jac}{Jac}
\dmo{\Id}{Id}
\dmo{\lcm}{lcm}
\nc{\Tm}{Teichm\"uller\xspace}
\begin{document}

\title[Field of definition]{The field of definition of affine invariant submanifolds of the moduli space of abelian differentials}
%
\author[A.Wright]{Alex~Wright}
\address{Math\ Department\\University of Chicago\\
5734 South University Avenue\\
Chicago, IL 60637}
\email{alexmwright@gmail.com}
%

\begin{abstract}
The field of definition of an affine invariant submanifold $\cM$ is the smallest subfield of $\bR$ such that $\cM$ can be defined in local period coordinates by linear equations with coefficients in this field. We show that the field of definition is equal to the intersection of the holonomy fields of translation surfaces in $\cM$, and is a real number field of degree at most the genus. 

We show that the projection of the tangent bundle of $\cM$ to absolute cohomology $H^1$ is simple, and give a direct sum decomposition of $H^1$ analogous to that given by M\"oller in the case of Teichm\"uller curves. 

Applications include explicit full measure sets of translation surfaces whose orbit closures are as large as possible, and evidence for finiteness of algebraically primitive Teichm\"uller curves. 

The proofs use recent results of Avila, Eskin, Mirzakhani, Mohammadi, and M\"oller.
\end{abstract}

\maketitle
\thispagestyle{empty}



\section{Introduction}\label{S:intro}
\subsection{Background.}

During the past three decades, it has been discovered that many properties of a translation surface are determined to a  surprising extent by its $SL(2,\bR)$--orbit closure. The orbit closure is relevant for: 
\begin{itemize}
\item The dynamics of the straight line flow, for example the Veech dichotomy and deviations of ergodic averages \cite{V,F, EKZbig};
\item Counting problems (the Siegel-Veech formula) \cite{V4, EMa};
\item Flat geometry, for example which translation surfaces can be written as a convex polygon with edge identifications \cite{V5}.
\end{itemize}

Recently Eskin-Mirzakhani-Mohammadi have announced a proof that the $SL(2,\bR)$--orbit closure of every unit area translation surface is the set of unit area translation surfaces in some \emph{affine invariant submanifold} \cite{EM, EMM} (definitions are recalled in Section 2). However, the work of Eskin-Mirzakhani-Mohammadi does not give the orbit closure of any particular translation surface.  Prior to this work, the orbit closure was known for only a measure zero subset of translation surfaces of genus greater than two.  

\subsection{Statement of results.} 

The \emph{field of definition} $\bk(\cM)$ of an affine invariant submanifold $\cM$ is the smallest subfield of $\bR$ such that $\cM$ can be defined in local period coordinates by linear equations with coefficients in this field.

\begin{thm}\label{T:main}
The field of definition $\bk(\cM)$ of an affine invariant submanifold $\cM$ is a real number field of degree at most the genus. It is equal to the intersection of the holonomy fields of all translations surfaces in $\cM$. 
\end{thm}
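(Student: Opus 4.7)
The plan is to leverage the simplicity of $p(T\cM)$ as a $\bk(\cM)$-local system (the second theorem of the introduction), where $p\colon H^1(X,\Sigma;\bC) \to H^1(X;\bC)$ is the projection to absolute cohomology. First I would observe that since $T\cM$ is cut out by linear equations over $\bk(\cM)$ in period coordinates and $p$ is defined over $\bQ$, there is a $\bk(\cM)$-subbundle $W \subset H^1(X;\bk(\cM))$ with $W \otimes_{\bk(\cM)} \bR = p(T\cM)$; by assumption $W$ is simple as a $\bk(\cM)$-local system. Set $d = [\bk(\cM):\bQ]$ and $n = \dim_{\bk(\cM)} W$.

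For the degree bound, the key step is to show that the Galois conjugates $W^\sigma \subset H^1(X;\overline{\bQ})$, indexed by embeddings $\sigma\colon \bk(\cM) \hookrightarrow \overline{\bQ}$, form a direct sum. For $\sigma \neq \mathrm{id}$, the intersection $W \cap W^\sigma$ is $\pi_1(\cM)$-invariant and inherits a $\bk(\cM)$-submodule structure, so by simplicity it is either $0$ or $W$; since $\bk(\cM)$ is minimal as a field of definition, $W \neq W^\sigma$, forcing the intersection to vanish. Thus $\bigoplus_\sigma W^\sigma$ is Galois-invariant of $\overline{\bQ}$-dimension $nd$ and descends to a $\bQ$-subspace of $H^1(X;\bQ)$, giving $nd \leq 2g$. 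For any $(X,\omega) \in \cM$ with $\omega \neq 0$, the classes $\Re[\omega]$ and $\Im[\omega]$ are $\bR$-linearly independent in $p(T\cM) = W \otimes_{\bk(\cM)} \bR$, which has $\bR$-dimension $n$, so $n \geq 2$. Combining gives $d \leq g$, and since $\bk(\cM) \subset \bR$ we obtain a real number field of degree at most the genus.

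For the identification $\bk(\cM) = \bigcap_{(X,\omega)\in\cM} \bk(X,\omega)$, I would establish both inclusions. For $\bk(\cM) \subseteq \bk(X,\omega)$, the scalar action of $\bk(\cM)$ on $W$ preserves the $\bQ$-span of the absolute periods of $(X,\omega)$, which exhibits $\bk(\cM)$ as contained in the multiplier field defining $\bk(X,\omega)$. For the reverse, it suffices to exhibit a single $(X,\omega) \in \cM$ with $\bk(X,\omega) \subseteq \bk(\cM)$; by simplicity of $W$, the locus where extra multipliers exist corresponds to a proper $\bk(\cM)$-invariant subbundle cutting out a lower-dimensional subvariety of $\cM$, so a generic surface attains the minimum holonomy field.

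I expect the main obstacle to be the equivalence with holonomy fields: bridging the local, period-based definition of $\bk(X,\omega)$ with the global $\bk(\cM)$-module structure on $W$ requires care, and the genericity claim for the reverse inclusion depends on translating simplicity into an algebraic-geometric statement about the locus of surfaces with excess multipliers. A further technical point is that handling the case where $\bk(\cM)$ is not Galois requires passing to the Galois closure in the direct-sum argument, where one must check that the $\bk(\cM)$-submodule structure on $W \cap W^\sigma$ still makes sense.
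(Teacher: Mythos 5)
Your plan correctly identifies the simplicity of $p(T(\cM))$ as the engine of the proof, and your dimension count (Galois conjugates $W^\sigma$ in direct sum inside $H^1(X;\bQ)\otimes\overline{\bQ}$, plus $n\geq 2$ coming from $\Re[\omega],\Im[\omega]$) is indeed close to how the paper obtains the degree bound. But there is a genuine gap: every step of your argument silently assumes that $\bk(\cM)$ is the field of definition of $W = p(T(\cM))$ inside absolute cohomology, whereas $\bk(\cM)$ is by definition the field of definition of $T(\cM)$ in \emph{relative} period coordinates. A priori, $p(T(\cM))$ might be definable over a strictly smaller field $\bk_0 \subsetneq \bk(\cM)$; in that case any $\sigma$ fixing $\bk_0$ but not $\bk(\cM)$ gives $W^\sigma = W$, the direct sum $\bigoplus_\sigma W^\sigma$ collapses, and your argument only bounds $[\bk_0 : \bQ]$, not $[\bk(\cM):\bQ]$. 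Likewise the ``scalar action of $\bk(\cM)$ on $W$'' step in your holonomy-field inclusion really only sees the $\bk_0$-structure. Establishing $\bk_0 = \bk(\cM)$ is exactly the content of Proposition~\ref{P:galois} and is the hardest part of the theorem. The paper proves it by using the Closing Lemma to produce a pseudo-Anosov whose action on $H^1_{rel}$ has a simple top eigenvalue $\lambda$, showing that all Galois conjugates $\sigma(\lambda)$ (for $\sigma$ fixing the trace field) are also simple eigenvalues whose eigenvectors lie in $T(\cM)$, and then invoking the Primary Decomposition Theorem to get a vector of $T(\cM)$ with coordinates in the trace field; the Simplicity Theorem plus integrality of the monodromy then propagate this to show $T(\cM)$ itself is defined over the trace field. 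Your proposal has no substitute for this step.

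A secondary concern is your handling of the inclusion $\bigcap_{(X,\omega)} \bk(X,\omega) \subseteq \bk(\cM)$. The paper's route is elementary: linear equations over $\bk(\cM)$ can be solved over $\bk(\cM)$, so $\cM$ contains a surface with period coordinates in $\bk(\cM)[i]$, whose holonomy field is then automatically contained in $\bk(\cM)$. Your alternative --- arguing that ``the locus of extra multipliers'' is cut out by a proper flat subbundle --- is not obviously meaningful: having a strictly larger holonomy field is a pointwise arithmetic condition on an individual surface, not a condition defining a flat subbundle, and simplicity of $W$ does not directly translate into the measure or dimension statement you want. You should replace this with the direct linear-algebra construction.
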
  

The second statement gives that $\bk(\cM)$ can be explicitly calculated from the absolute periods of translation surfaces in $\cM$. 

\begin{eg}
If $\cM$ contains a single square-tiled surface, or even just a single translation surface whose absolute periods are in $\bQ[i]$ and whose relative periods are arbitrary, then $\bk(\cM)=\bQ$. 
\end{eg}

\bold{Generic translation surfaces.} A unit area translation surface is said to be \emph{$\cM$--generic} if its orbit closure is equal to $\cM_1$, the set of unit area translation surfaces within $\cM$. Work of Masur and Veech \cite{Ma2, V2} gives the ergodicity of the $SL(2,\bR)$--action on $\cM_1$, and ergodicity guarantees that almost every point (with respect to a natural invariant smooth measure) in $\cM_1$ is $\cM$--generic. (Masur and Veech worked in a less general setting, but their proofs apply equally well to affine invariant submanifolds.) However, prior to this work not many examples of \emph{explicit} generic translation surfaces were known, see below for a summary. 

In Section \ref{S:gen}, we define what it means for a translation surface to have \emph{$\cM$--typical periods}. Roughly, $(X,\omega)$ has $\cM$--typical periods if its periods do not satisfy any linear equation which might, according to Theorem \ref{T:main}, define an affine invariant submanifold properly contained in $\cM$. Having $\cM$--typical periods is an explicit, field theoretic condition. 

\begin{cor}\label{C:generic}
Let $\cM$ be any affine invariant submanifold. Let $\cG$ be the set of translation surfaces $(X,\omega)\in\cM_1$ with $\cM$--typical periods. Then
\begin{enumerate}
\item $\cG$ has full measure in $\cM_1$, and 
\item every translation surface in $\cG$ is $\cM$--generic. 
\end{enumerate}
\end{cor}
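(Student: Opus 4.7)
The plan is to combine Theorem \ref{T:main} with the Eskin--Mirzakhani--Mohammadi classification of orbit closures, proving part (2) first and then using its structure to see what must be checked for part (1).

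For part (2), suppose $(X,\omega) \in \cG$, and let $\cN \subseteq \cM$ be the affine invariant submanifold whose unit area locus $\cN_1$ is the $SL(2,\bR)$--orbit closure of $(X,\omega)$ (this exists by \cite{EM,EMM}). By Theorem \ref{T:main}, $\cN$ is cut out in local period coordinates near $(X,\omega)$ by linear equations with coefficients in a real number field $\bk(\cN)$ of degree at most the genus $g$. If $\cN$ were a proper subset of $\cM$, at least one such equation would be satisfied at $(X,\omega)$ while not holding identically on $\cM$. This is precisely the type of equation excluded by the definition of $\cM$--typical periods, contradicting $(X,\omega) \in \cG$. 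Therefore $\cN = \cM$ and $(X,\omega)$ is $\cM$--generic.

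For part (1), I would argue by countability plus measure zero. The linear equations in period coordinates on $\cM$ with coefficients in some real number field of degree at most $g$ form a countable set: there are only countably many such number fields, and within each the collection of finite-coefficient equations is countable. Any single such equation either holds identically on $\cM$ (in which case it is automatic and imposes no restriction) or defines a proper real-analytic subvariety of $\cM$. Such a proper subvariety has measure zero for the natural $SL(2,\bR)$--invariant measure on $\cM_1$, since this measure is absolutely continuous with respect to Lebesgue measure in local period coordinates. The complement of $\cG$ in $\cM_1$ is contained in the countable union of these proper subvarieties, so it has measure zero and $\cG$ has full measure.

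The main obstacle is ensuring that the notion of $\cM$--typical periods is strong enough to exclude \emph{every} possible proper sub--affine invariant submanifold through $(X,\omega)$, while simultaneously remaining weak enough to impose only countably many conditions. Both features rely crucially on Theorem \ref{T:main}: the degree bound on $\bk(\cN)$ makes the set of admissible defining equations countable, while the identification of $\bk(\cN)$ as a subfield of $\bR$ of number-theoretic origin lets us phrase typicality as an explicit field-theoretic condition on the periods. Without the uniform degree bound, the countability step in (1) would break down, so the key work is citing Theorem \ref{T:main} at exactly this point.
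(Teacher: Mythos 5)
Your overall strategy matches the paper's: for part (2) apply \cite{EM,EMM} to get an orbit closure $\cN$, apply Theorem \ref{T:main} to get number-theoretic control of $\bk(\cN)$, and conclude $\cN = \cM$ from the definition of $\cM$--typical; for part (1) use a countability-plus-measure-zero argument. So the route is essentially the same.

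There is, however, a small but genuine gap in part (2). The definition of $\cM$--special periods requires finding a field $\bk$ satisfying \emph{three} conditions: (a) $\bk$ has degree at most $g$; (b) $\bk(\cM)\subset\bk$; and (c) $\bk$ is a subfield of the holonomy field of $(X,\omega)$. You verify (a) for $\bk=\bk(\cN)$ via the degree bound in Theorem \ref{T:main}, and then assert that the resulting equation is ``precisely the type excluded.'' But you never check (b) or (c), so you have not actually exhibited an instance of the definition. Both follow from the \emph{second} sentence of Theorem \ref{T:main} (the characterization of $\bk(\cN)$ as the intersection of holonomy fields over $\cN$): since $(X,\omega)\in\cN$, the field $\bk(\cN)$ is contained in the holonomy field of $(X,\omega)$, giving (c); and since $\cN\subset\cM$, the intersection over $\cN$ contains the intersection over $\cM$, i.e.\ $\bk(\cM)\subset\bk(\cN)$, giving (b). Without this step, the witness field $\bk(\cN)$ could in principle fail to meet the hypotheses of $\cM$--specialness, and the contradiction would not be established. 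Once you insert these two lines, your argument is complete and coincides with the paper's.
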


Let $\overline{\bQ}$ denote the algebraic closure of $\bQ$, and set $\oQ= \overline{\bQ}\cap\bR$. The set $\cG$ includes in particular the translation surfaces in $\cM_1$ whose period coordinates span a $\oQ$--vector space of dimension $\dim_\bC \cM$. So in particular, Corollary \ref{C:generic} implies that translation surfaces whose period coordinates are ``sufficiently transcendental" have full orbit closure. 

\begin{eg}\label{eg1}
Consider a translation surface $(X,\omega)$ in a stratum $\cH$, and assume that the real parts of the period coordinates of $(X,\omega)$ are contained in $\bQ[\pi, \pi^2, \pi^3, \ldots]$ and are linearly independent over $\bQ$. Then the period coordinates of $(X,\omega)$ are automatically linearly independent over $\oQ$, and so $(X,\omega)$ has $\cH$--typical periods. Corollary \ref{C:generic} gives that such $(X,\omega)$ are $\cH$--generic. 
\end{eg}

The set $\cG$ also contains a dense set of translation surfaces whose periods lie in a number field.

\bold{Global structure.} Our results on field of definition in fact follow from considerations about the global structure of affine invariant submanifolds which are of independent interest.

 Let $H^1$ denote the flat bundle over $\cM$ whose fiber over $(X, \omega)\in \cM$ is $H^1(X,\bC)$, and let $H^1_{rel}$ denote the flat bundle whose fiber over $(X,\omega)$ is $H^1(X,\Sigma, \bC)$, where $\Sigma$ is the set of singularities of $(X,\omega)$. Let $p:H^1_{rel}\to H^1$ denote the natural projection from relative to absolute cohomology. Note that $T(\cM)$ is a flat subbundle of $H^1_{rel}$.

The field of definition of a flat subbundle $E\subset H^1$ is the smallest subfield of $\bR$ so that locally the linear subspace $E$ of $H^1(X,\bC)$ can be defined by linear equations (with respect to an integer basis of $H_1(X, \bZ)$) with coefficients in this field.  The trace field of a flat bundle over $\cM$ is defined as the field generated by traces of the corresponding representation of $\pi_1(\cM)$.

\begin{thm}\label{T:galois}
Let $\cM$ be an affine invariant submanifold. The field of definition of $p(T(\cM))$ and trace field of $p(T(\cM))$ are both equal to $\bk(\cM)$. 

Set $\bV_{\Id}=p(\cT(\cM))$. There is a semisimple flat bundle $\bW$, and for each field embedding $\rho:\bk(\cM)\to\bC$ there is a flat simple bundle $\bV_\rho$ which is Galois conjugate to $\bV_{\Id}$, so that
\[H^1 = \left(\bigoplus_\rho \bV_\rho\right) \oplus \bW.\]

The bundle $\bW$ does not contain any subbundles isomorphic to any $\bV_\rho$. Both $\bW$ and $\oplus \bV_\rho$ are defined over $\bQ$. 

In particular, 
\[\dim_\bC p(T(\cM)) \cdot \deg_\bQ \bk(\cM) \leq 2g.\]
\end{thm}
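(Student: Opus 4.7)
Set $\bV_{\Id} = p(T(\cM))$. The strategy is to prove the structural decomposition of $H^1$ in four steps, after which the inequality $\dim_\bC p(T(\cM))\cdot \deg_\bQ \bk(\cM) \leq 2g$ follows by counting complex dimensions inside $H^1$. The steps are: (i) show $\bV_{\Id}$ is a simple flat subbundle of $H^1$; (ii) identify both its field of definition and its trace field with $\bk(\cM)$; (iii) Galois-conjugate $\bV_{\Id}$ to produce the bundles $\bV_\rho$ and verify that their sum is direct and $\bQ$-defined; (iv) use semisimplicity of the flat connection on $H^1$ to split off a $\bQ$-defined complement $\bW$ in which no $\bV_\rho$ appears.

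\textbf{Simplicity and identification of $\bk(\cM)$.} For step (i), suppose $U$ were a proper, nonzero, real flat $SL(2,\bR)$-invariant subbundle of $\bV_{\Id}$. Then $p^{-1}(U)\cap T(\cM)$ would be a proper, real, flat, $SL(2,\bR)$-invariant subbundle of $T(\cM)$ still containing the tangent directions to the $GL^+(2,\bR)$-orbits. By the correspondence of Eskin--Mirzakhani--Mohammadi between such subbundles and affine invariant submanifolds, this would yield an affine invariant submanifold properly contained in $\cM$, contradicting the minimality of $\cM$. For step (ii), the inclusion $\mathrm{fod}(\bV_{\Id})\subseteq \bk(\cM)$ is immediate, because $T(\cM)$ is defined over $\bk(\cM)$ by definition and both $p$ and $\ker p$ are $\bQ$-defined. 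For the reverse inclusion one shows that $T(\cM)$ is determined by $\bV_{\Id}$ together with the $\bQ$-defined REL subspace $T(\cM)\cap \ker p$, so any smaller field of definition for $\bV_{\Id}$ would yield a smaller field of definition for $T(\cM)$, contradicting the minimality of $\bk(\cM)$. The equality of the trace field with the field of definition then follows from simplicity by a standard Schur-type argument.

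\textbf{Galois decomposition and complement.} For step (iii), since $H^1$ is $\bQ$-defined and $\bV_{\Id}$ is $\bk(\cM)$-defined, each embedding $\rho:\bk(\cM)\to\bC$ produces a Galois-conjugate flat subbundle $\bV_\rho\subseteq H^1$, again simple and of the same complex dimension as $\bV_{\Id}$. Distinct $\bV_\rho$ are pairwise non-isomorphic as flat bundles because their monodromy characters are the distinct $\rho$-conjugates of a fixed character, so by Schur's lemma no nonzero flat morphism exists between them and the sum $\bigoplus_\rho \bV_\rho$ is direct. Being $\mathrm{Gal}(\overline{\bQ}/\bQ)$-invariant, this sum is $\bQ$-defined. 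For step (iv), the flat bundle $H^1$ over $\cM$ is semisimple by the Avila--Eskin--M\"oller semisimplicity theorem for the Kontsevich--Zorich cocycle, so $\bigoplus_\rho \bV_\rho$ admits a flat complement $\bW$; taking $\bW$ to be the sum of all isotypic components other than the common class of the $\bV_\rho$ guarantees both that $\bW$ contains no $\bV_\rho$ and that $\bW$ is $\bQ$-defined. The dimension bound is then
\[
\dim_\bC p(T(\cM))\cdot \deg_\bQ \bk(\cM)=\dim_\bC\bigoplus_\rho \bV_\rho\leq \dim_\bC H^1 = 2g.
\]

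\textbf{Main obstacle.} The most delicate step is the equality in (ii) of the field of definition of $\bV_{\Id}$ with $\bk(\cM)$. A priori one might fear $\bk(\cM)$ to be strictly larger, because relative periods can encode linear constraints invisible to the absolute-period projection $p$. Overcoming this requires showing that the REL kernel $T(\cM)\cap \ker p$ is itself $\bQ$-defined, so that no non-$\bQ$ relative-period data is lost under $p$; this in turn uses the integer structure on $H^1(X,\Sigma;\bZ)$ and the fact that absolute cycles span a $\bZ$-direct summand. A secondary difficulty is securing the complement $\bW$ as a \emph{flat} $\bQ$-defined subbundle: over each fiber one can always decompose, but producing a globally flat complement depends essentially on the Avila--Eskin--M\"oller semisimplicity, without which the direct sum decomposition would not survive parallel transport.
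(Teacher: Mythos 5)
Your proposal diverges from the paper at every key juncture, and each divergence opens a genuine gap. The paper's entire argument is driven by the Closing Lemma for the Teichm\"uller geodesic flow: one produces a pseudo-Anosov element $\phi$ in the monodromy, uses that $e^T$ is a \emph{simple} eigenvalue of $(\phi^*)^{-1}$ whose eigenvector $\Re(\omega)$ lies in $V=p(T(\cM))$, and then runs all three of (simplicity), (field identification), (multiplicity one) off of that eigenvalue. Your proposal uses none of this, and the substitutes you offer do not work.

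\textbf{Step (i).} You want to show $\bV_{\Id}$ simple by passing a proper flat $SL(2,\bR)$-invariant subbundle $U\subset\bV_{\Id}$ to the bundle $p^{-1}(U)\cap T(\cM)$ and claiming it still contains the tangent to $GL^+(2,\bR)$-orbits, hence integrates to a proper affine invariant submanifold via ``the EMM correspondence.'' But the containment of the tautological plane $\langle\Re\omega,\Im\omega\rangle$ in $p^{-1}(U)\cap T(\cM)$ requires $p(\Re\omega),p(\Im\omega)\in U$, and nothing in your hypotheses gives this for a general flat complement arising from AEM semisimplicity. This is precisely the nontrivial content the paper extracts from the Closing Lemma: $\Re\omega$ is the unique eigenvector for the leading eigenvalue of a pseudo-Anosov, so if $\bV_{\Id}=E'\oplus E''$ flatly, both the $E'$ and $E''$ components of $\Re(\omega)$ would be eigenvectors of that simple eigenvalue, a contradiction. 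There is also no ``correspondence'' in \cite{EMM} taking arbitrary flat $SL(2,\bR)$-invariant subbundles of $T(\cM)$ to affine invariant submanifolds; their theorem identifies orbit closures, not arbitrary subbundles.

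\textbf{Step (ii).} The inclusion $\mathrm{fod}(\bV_{\Id})\subseteq\bk(\cM)$ is fine. But your reverse inclusion asserts that $T(\cM)$ is determined by $\bV_{\Id}$ together with $T(\cM)\cap\ker p$, so a smaller field of definition for $\bV_{\Id}$ would give one for $T(\cM)$. That determination is simply false: two distinct subspaces of $H^1_{rel}$ can have the same image under $p$ and the same intersection with $\ker p$ (they differ by a ``shear'' into $\ker p$). Your obstacle paragraph flags the need to show $T(\cM)\cap\ker p$ is $\bQ$-defined, but even granted that, the determination claim fails. The paper instead shows something stronger and very different: it uses the monodromy matrix $A'$ of a closed $g_t$-orbit, the primary decomposition over the trace field $\bk$, and the Simplicity Theorem (in the form: any proper invariant subspace of $V'=T(\cM)$ is contained in $\ker p$) to produce a vector $v'\in V'$ with coordinates in $\bk$ \emph{outside} $\ker p$, whose integral monodromy orbit then spans $V'$, yielding $\bk(\cM)\subseteq\bk$.

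\textbf{Step (iv).} You set $\bW$ to be the sum of isotypic components other than those of the $\bV_\rho$, and assert $\bW$ contains no copy of any $\bV_\rho$. This silently assumes each $\bV_\rho$ has \emph{multiplicity one} in $H^1$, which is an essential and nontrivial part of the theorem. If the $V_{\Id}$-isotypic component of $H^1$ had multiplicity two, no choice of $\bW$ avoiding $\bV_\rho$-subbundles would give $H^1=(\oplus_\rho\bV_\rho)\oplus\bW$. The paper establishes multiplicity one by observing again that the pseudo-Anosov monodromy matrix $A$ has a simple eigenvalue $e^T$ whose eigenvector lies in $V$; a second copy of $V$ in $H^1$ would contradict simplicity of that eigenvalue. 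Your proposal contains no substitute for this argument.

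In short: the proposal's high-level outline matches the theorem's statement, but the three load-bearing arguments it supplies---integrating a flat subbundle to a smaller affine invariant submanifold, reconstructing $T(\cM)$ from its $p$-image and its $\ker p$ part, and choosing $\bW$ by isotypic components---are each either false or unjustified, and the tools the paper actually relies on (the Closing Lemma, the simplicity of the leading pseudo-Anosov eigenvalue, and the primary decomposition over the trace field) are absent.
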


The direct sum decomposition of $H^1$ in Theorem \ref{T:galois} was previously established in the case of Teichm\"uller curves by Martin M\"oller \cite{M}, and is one of the main tools used in the study of closed $SL(2,\bR)$--orbits. In the Teichm\"uller curve case, M\"oller showed that the splitting of $H^1$ is compatible with the Hodge decomposition; this is conjectured  for general affine invariant submanifolds, but this does not follow from our work.\footnote{Added in proof: This has been established by Simion Filip \cite{Fi1, Fi2}, who has also shown that the field of definition is totally real and that affine invariant submanifolds are  varieties. } When the splitting of $H^1$ is non-trivial ($H^1\neq p(T(\cM))$) and compatible with the Hodge decomposition, then $\cM$ parameterizes translation surfaces whose Jacobians admit non-trivial endomorphisms \cite[Lemma 4.2]{M6}.

There may be a even more direct connection between the field of definition and the global structure of affine invariant submanifolds. 

\begin{conj}[Mirzakhani]
If an affine invariant submanifold $\cM$ is defined over $\bQ$, and $\cM$ is not a connected component of a stratum, then every translation surface in $\cM$ covers a quadratic differential (half-translation surface) of smaller genus. 
\end{conj}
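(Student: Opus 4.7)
The hypothesis that $\cM$ is defined over $\bQ$ makes the decomposition of Theorem~\ref{T:galois} collapse to a single $\bQ$-rational splitting $H^1=\bV_{\Id}\oplus\bW$ of flat bundles, with $\bV_{\Id}=p(T(\cM))$. The plan is to extract from this splitting a geometric cover of each $(X,\omega)\in\cM$.

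I would first address the two ways $\cM$ can fail to be a stratum. If $p(T(\cM))\subsetneq H^1$ then $\bW\neq 0$ and there is nontrivial flat data to exploit; this is the case I would focus on. The other possibility is $p(T(\cM))=H^1$ with only relative periods constrained, which would require a separate argument, likely by analyzing how a $\bQ$-rational flat subbundle of $\ker p$ forces symmetries of the singularity set that propagate to an automorphism of the underlying Riemann surface.

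Assuming $\bW\neq 0$, the next step is to upgrade the flat splitting to a statement about each Jacobian. Using compatibility of the splitting with the Hodge decomposition (conjectural at the time of writing, now established by Filip), $\bW$ underlies a sub-variation of weight-one $\bQ$-Hodge structures, and this yields for each $(X,\omega)$ a $\bQ$-sub-abelian-variety $B\subset\Jac(X)$ varying holomorphically with the parameter. Since $[\omega]$ is tangent to the $GL(2,\bR)$ action and therefore lies in $\bV_{\Id}$, it projects trivially to $\bW$; consequently $\omega$ either descends to an abelian differential on the quotient $\Jac(X)/B$, or it is anti-invariant under an involution swapping $B$ with a complementary factor, in which case $\omega^2$ descends to a quadratic differential on $X/\sigma$.

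The main obstacle is the final step: realizing the rational isogeny factor of $\Jac(X)$ as coming from an actual cover of curves. A generic rational sub-Hodge-structure of $H^1(X,\bQ)$ is not geometric, since most isogeny factors of a Jacobian are not themselves Jacobians, and even when they are the quotient of abelian varieties need not descend to a map of curves. The distinguishing feature in our situation is that $\bW$ is flat across the entire positive-dimensional family $\cM$, which is a rigidity constraint in the spirit of (but stronger than) the conditions imposed on Shimura subvarieties of $\cA_g$. Converting this rigidity into a uniform family of quotient maps $X\to Y$ carrying the required differential on the base is where the real content of the conjecture lies, and I expect this to require substantial input beyond the techniques used to prove Theorem~\ref{T:galois}.
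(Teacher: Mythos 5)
This statement is a \emph{conjecture} attributed to Mirzakhani; the paper does not prove it and records it precisely because it is open. There is therefore no proof in the paper to compare against, and your proposal has to be judged on whether it closes the gap on its own.

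It does not, and to your credit you say so. Your sketch correctly identifies that over $\bQ$ the decomposition of Theorem \ref{T:galois} becomes a $\bQ$--rational flat splitting $H^1 = \bV_{\Id} \oplus \bW$, and that, granting compatibility with the Hodge structure (not available within this paper's toolkit, though established afterwards by Filip), $\bW$ furnishes a rational weight-one sub-variation of Hodge structure and hence an isogeny factor of each $\Jac(X)$ varying flatly over $\cM$. But the passage from a Hodge-theoretic isogeny factor to an actual branched cover of curves carrying a half-translation structure is exactly the content of the conjecture, and nothing in your argument supplies it: a generic isogeny factor of a Jacobian is not itself a Jacobian, a surjection of abelian varieties need not descend to a map of curves, and the involution you invoke in your third paragraph is introduced with no mechanism that produces it. Your treatment of the complementary case $p(T(\cM)) = H^1$ (constraints purely relative, $\bW = 0$) is likewise speculative --- there is no evident reason a flat $\bQ$--rational subbundle of $\ker p$ should force an automorphism of the underlying Riemann surface. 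What you have written is a plausible framework that makes the problem precise, not a proof, and that is consistent with the paper leaving the statement as a conjecture.
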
 

\begin{conj}[Mirzakhani]
If an affine invariant submanifold $\cM$ is not defined over $\bQ$, then $p(T(\cM))$ has dimension $2$.
\end{conj}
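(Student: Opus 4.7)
Since this is stated as a conjecture of Mirzakhani, I can only sketch a plausible line of attack within the framework of this paper. Set $k = \dim_\bC p(T(\cM))$ and $d = \deg_\bQ \bk(\cM)$; by hypothesis and Theorem \ref{T:main}, $d \geq 2$, and $k \geq 2$ because $p(T(\cM))$ always contains the tautological plane $\bL$ spanned pointwise by $\Re\omega$ and $\Im\omega$. Theorem \ref{T:galois} already supplies the Galois decomposition $H^1 = \bigoplus_\rho \bV_\rho \oplus \bW$ together with the inequality $kd \leq 2g$; the task is to sharpen this to $k = 2$ whenever $d \geq 2$.

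The plan is to exploit the dynamical asymmetry between $\bV_{\mathrm{Id}} = p(T(\cM))$ and its Galois conjugates. The tautological plane $\bL \subset \bV_{\mathrm{Id}}$ is distinguished: the $SL(2,\bR)$-action on $\bL$ is the standard representation, and the Kontsevich-Zorich cocycle has Lyapunov exponents $\pm 1$ on $\bL$. First, I would argue that no Galois conjugate $\bV_\rho$ with $\rho \neq \mathrm{Id}$ admits a flat 2-plane with analogous dynamical behaviour, since Galois descent of such a plane would produce a proper invariant subbundle of $\bV_{\mathrm{Id}}$, contradicting the simplicity established in Theorem \ref{T:galois}. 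Second, assuming $k > 2$ for contradiction, any non-tautological complement of $\bL$ inside $\bV_{\mathrm{Id}}$ would, by Galois conjugation, produce a matching-dimensional complement inside every $\bV_\rho$. Combined with the Hodge-theoretic refinement later supplied by Filip (each $\bV_\rho$ contributes $k/2$ to $\dim H^{1,0} = g$, with the piece of $\bV_{\mathrm{Id}} \cap H^{1,0}$ outside $\bL$ replicated across all $\rho$), this should conflict with the rigidity of $SL(2,\bR)$-invariant structures on $H^1$ coming from the Eskin-Mirzakhani-Mohammadi measure classification.

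The main obstacle is the last step. A Galois conjugate $\bV_\rho$ carries no intrinsic $SL(2,\bR)$-action, so one cannot directly import the dynamics from $\bL$. What appears to be needed is a Galois-equivariant rigidity statement: the presence of $d \geq 2$ Galois-conjugate copies of a non-trivial flat bundle inside $H^1$ should force $\cM$ to parameterise translation surfaces whose Jacobians carry endomorphism algebras too large to deform in a $k$-dimensional family for $k > 2$. Producing such a statement, beyond Theorem \ref{T:galois} and its Hodge-theoretic refinement, is the genuinely new ingredient the conjecture seems to demand.
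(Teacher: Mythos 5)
This statement is labelled a conjecture of Mirzakhani in the paper and is \emph{not} proved there; the paper simply records it as an open problem, so there is no proof to compare against. Your write-up is therefore correctly framed as a speculative sketch, and your honest acknowledgement that a ``genuinely new ingredient'' is still needed is the right conclusion. But the sketch as written also contains a conceptual misstep worth flagging.

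The tautological plane $\bL = \mathrm{span}(\Re\omega, \Im\omega)$ is $SL(2,\bR)$-equivariant but it is \emph{not} a flat subbundle of $p(T(\cM))$: parallel transport under the Gauss--Manin connection does not preserve $\bL$. (Indeed, by the Simplicity Theorem $\bV_{\mathrm{Id}} = p(T(\cM))$ has no proper nonzero flat subbundles at all, so in particular $\bL$ cannot be one unless $\dim \bV_{\mathrm{Id}} = 2$ --- which is exactly the conclusion you are trying to reach, not an input.) This undercuts both of your central steps: ``Galois descent'' of $\bL$ to a subbundle of a conjugate $\bV_\rho$ is not well-posed because Galois conjugation acts on flat (monodromy-invariant) structures, not on the merely $SL(2,\bR)$-invariant plane $\bL$; and a ``non-tautological complement of $\bL$ inside $\bV_{\mathrm{Id}}$'' cannot be a flat subbundle either, again by simplicity, so Galois conjugation cannot be applied to it. The appeal to Filip's Hodge-theoretic refinement is also outside the scope of the present paper (it is explicitly noted only as ``added in proof''). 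None of this means the conjecture is hopeless, but the approach via Galois-conjugating $\bL$ does not run within the flat-bundle framework the paper sets up. One would need to work instead with the interaction between the $SL(2,\bR)$-equivariant structure (which lives only on $\bV_{\mathrm{Id}}$) and the variation of Hodge structure on the conjugates $\bV_\rho$, which is precisely what you correctly identify as the missing rigidity statement.

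Finally, a caution: since this paper appeared, the landscape of primitive affine invariant submanifolds has changed considerably, and Mirzakhani's conjectures from this section should be treated with care rather than assumed true; any proof strategy should first be tested against the examples discovered subsequently.
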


In the case that $p(T(\cM))$ is 2 dimensional, $\cM$ should parameterize eigenforms for real multiplication. (This real multiplication may only be present on a factor of the Jacobian up to isogeny, instead of the entire Jacobian. This is analogous to the case of Teichm\"uller curves treated in \cite{M}.)

%

\bold{Evidence for finiteness of algebraically primitive Teichm\"uller curves.} Algebraically primitive Teichm\"uller curves correspond to two complex dimensional affine invariant submanifolds where the trace field of $p(T(\cM))$ has degree equal to the genus.

The equidistribution results of Eskin-Mirzakhani-Mohammadi \cite{EMM} and Theorem \ref{T:galois} allow us to show

\begin{thm}\label{T:primequi}
Let $\cH$ be a connected component of the minimal stratum in prime genus. If $\cH$ contains infinitely many algebraically primitive Teichm\"uller curves, then some subsequence equidistributes towards $\cH$.
\end{thm}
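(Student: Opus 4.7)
The plan is to use the equidistribution results of Eskin--Mirzakhani--Mohammadi \cite{EMM} to extract from the infinite collection of algebraically primitive Teichm\"uller curves a subsequence $\cM_{n_k}$ whose measures converge to the natural measure on some affine invariant submanifold $\cN \subseteq \cH$, with $\cM_{n_k} \subset \cN$ for all large $k$, and then to force $\cN = \cH$ via Theorem \ref{T:galois}. The key point is that primality of $g$ leaves only two possibilities for the field of definition $\bk(\cN)$, and each can be dismissed or identified separately.

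Since $\cM_{n_k} \subset \cN$, the characterization of $\bk$ as an intersection of holonomy fields in Theorem \ref{T:main} gives $\bk(\cN) \subset \bk(\cM_{n_k})$, while algebraic primitivity gives $[\bk(\cM_{n_k}):\bQ] = g$. As $g$ is prime, $[\bk(\cN) : \bQ]$ equals either $g$ or $1$.

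First suppose $[\bk(\cN) : \bQ] = g$. The dimension inequality in Theorem \ref{T:galois} then yields $\dim_\bC p(T(\cN)) \leq 2$. In the minimal stratum $|\Sigma|=1$, so $p$ is an isomorphism, forcing $\dim_\bC \cN \leq 2$. But $\cM_{n_k} \subset \cN$ is of complex dimension two, and affine invariant submanifolds are connected, so $\cN = \cM_{n_k}$ for every sufficiently large $k$, contradicting the fact that the $\cM_{n_k}$ are distinct. Therefore $\bk(\cN) = \bQ$, and by Theorem \ref{T:galois} the subbundle $p(T(\cN))$ is defined over $\bQ$ inside $H^1$. Fix a large $k$ and set $V_{\Id} = p(T(\cM_{n_k})) \subset p(T(\cN))|_{\cM_{n_k}}$. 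Theorem \ref{T:galois} applied to $\cM_{n_k}$ gives
\[ H^1|_{\cM_{n_k}} = \bigoplus_{\rho : \bk(\cM_{n_k}) \hookrightarrow \bC} V_\rho, \]
where each $V_\rho$ has rank two and the residual bundle $\bW$ vanishes because $2 \cdot g = 2g$ already exhausts the rank of $H^1$. Galois invariance of the $\bQ$-defined subbundle $p(T(\cN))|_{\cM_{n_k}}$ forces $V_\rho \subset p(T(\cN))|_{\cM_{n_k}}$ for every $\rho$, so summing gives $p(T(\cN))|_{\cM_{n_k}} = H^1|_{\cM_{n_k}}$ and $p(T(\cN))$ has full rank $2g$. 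Since $p$ is an isomorphism in the minimal stratum, $\dim_\bC \cN = 2g = \dim_\bC \cH$, and $\cN = \cH$.

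The main obstacle is the Galois-invariance step in the $\bk(\cN) = \bQ$ case: one must propagate the single inclusion $V_{\Id} \subset p(T(\cN))|_{\cM_{n_k}}$ to the entire Galois orbit using that $p(T(\cN))$ is cut out over $\bQ$, and then see that this, combined with primality of $g$ and minimality of the stratum, exhausts the whole of $H^1$. A secondary subtlety is verifying from \cite{EMM} that the extracted subsequence actually satisfies $\cM_{n_k} \subset \cN$ for large $k$, rather than merely that the supports converge in the Hausdorff sense.
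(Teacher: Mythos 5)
Your proof is correct and follows essentially the same route as the paper: extract an equidistributing subsequence via \cite{EMM} towards some $\cN$ containing the tail, use Theorem \ref{T:main} and primality of $g$ to force $\bk(\cN)\in\{\bQ,\ \text{degree-}g\}$, rule out degree $g$ by the dimension inequality in Theorem \ref{T:galois} (together with the fact that $p$ is an isomorphism on the minimal stratum), and then use the Galois decomposition of $H^1$ over a single algebraically primitive curve plus $\bQ$-rationality of $p(T(\cN))$ to conclude $T(\cN) = H^1_{rel}$ and hence $\cN = \cH$. The two subtleties you flag are handled exactly as you anticipate: \cite[Thm.\ 2.3]{EMM} provides both containment and equidistribution, and the Galois-invariance step is precisely the paper's observation that a $\bQ$-defined subrepresentation containing $V_{\Id}$ must be $Gal_\bQ(\bC)$-stable and therefore contain the entire Galois orbit $\bigoplus_\rho V_\rho = H^1|_{\cC}$.
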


We hope that Theorem \ref{T:primequi} will eventually lead to a proof that there are only finitely many algebraically primitive Teichm\"uller curves in the minimal stratum in prime genus greater than two.\footnote{Added in proof: This hope has been realized in joint work with Matheus \cite{MW}. Bainbridge and M\"oller have informed the  author that together with Habegger they have recently established new finiteness results that are complementary to those in \cite{MW}, using  different methods.} 


\bold{Countability of affine invariant submanifolds.} Theorem \ref{T:main} also provides an alternate proof of a step in Eskin-Mirzakhani-Mohammadi's theorem on $SL(2,\bR)$--orbit closures.

\begin{cor}[\cite{EMM}, Prop. 2.16]
There are only countably many affine invariant submanifolds in the moduli of translation surfaces. 
\end{cor}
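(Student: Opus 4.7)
\bold{Proof proposal.} The key leverage is Theorem \ref{T:main}: any affine invariant submanifold $\cM$ is locally cut out by linear equations whose coefficients lie in a real number field of degree at most the genus $g$. The plan is to parlay this into countability by (i) covering moduli space by countably many period charts, (ii) counting the possible local defining equations in each chart, and (iii) verifying that the local data determines the global submanifold.

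For step (i), the moduli space of translation surfaces decomposes into countably many connected components of strata (indexed by genus and by the partition of $2g-2$, with finitely many components per stratum), each a finite-dimensional, hence second-countable, manifold. Fix one such component $\cH$ and choose a countable family of period coordinate charts $\{U_i\}_{i\in\bN}$ covering it. To each affine invariant submanifold $\cM\subset\cH$ I would associate the pair $(i(\cM),L(\cM))$, where $i(\cM)$ is the smallest index with $\cM\cap U_i\neq\emptyset$ and $L(\cM)$ is the linear system defining $\cM$ in $U_{i(\cM)}$.

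For step (ii), real number fields of degree at most $g$ are in bijection with irreducible monic polynomials of degree at most $g$ over $\bQ$, hence form a countable set; each such field is itself countable, so finite systems of linear equations in finitely many variables with coefficients in such a field form a countable set. Thus $(i(\cM),L(\cM))$ ranges over a countable set as $\cM$ varies in $\cH$.

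Step (iii) is the only place outside Theorem \ref{T:main} where a nontrivial input is required, and I expect it to be the main subtle point. If $\cM_1,\cM_2\subset\cH$ share the same pair $(i,L)$, they agree on the open set $U_i$; then $\cM_1\cap\cM_2$ is both open in each $\cM_j$ (it contains the open subset $\cM_j\cap U_i$, on which $\cM_j$ is a linear subvariety) and closed in each $\cM_j$ (as the intersection of closed subsets of $\cH$), so the connectedness built into the definition of an affine invariant submanifold forces $\cM_1=\cM_2$. Granted this injectivity, the count in step (ii) immediately yields countability of the set of all affine invariant submanifolds.
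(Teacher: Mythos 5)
Your proof takes essentially the same approach as the paper, whose entire proof is the single sentence ``There are only countably many systems of real linear equations all of whose coefficients lie in a number field.'' You have fleshed out the countable-chart cover and the local-to-global step that the paper leaves implicit, which is the right instinct. One small slip in step (iii): merely \emph{containing} the open subset $\cM_j\cap U_i$ does not make $\cM_1\cap\cM_2$ open in $\cM_j$. The correct argument is to consider the set $S$ of points $x\in\cM_1$ at which $\cM_1$ and $\cM_2$ agree locally (i.e.\ some neighborhood of $x$ in $\cM_1$ lies in $\cM_2$): $S$ is open by definition and nonempty since it contains $\cM_1\cap U_i$; it is also closed, because in any period chart around a limit point both $\cM_1$ and $\cM_2$ are linear subspaces, and a linear subspace containing a relatively open piece of another must contain that entire local linear piece; connectedness of $\cM_1$ then gives $\cM_1\subset\cM_2$, and by symmetry $\cM_1=\cM_2$. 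With that fix your argument is complete and matches the paper's.
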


\begin{proof}
There are only countably many systems of real linear equations all of whose coefficients lie in a number field. 
\end{proof}

\subsection{Previous results}

Here we list the previous results which form the context for our applications to problems predating \cite{EM, EMM}. We do not use any of these results, and so omit some definitions which the reader can find in the references. For an introduction to the field of translation surfaces see the surveys \cite{MT, Z}. 

\bold{Generic translation surfaces.} In the case that $\cM$ is a connected component of a stratum, we simply call an $\cM$-generic translation surface \emph{generic}. 

McMullen has classified orbit closures in genus 2 (strata of abelian differentials).

\begin{thm}[\cite{Mc5}, Thm. 1.2]\label{T:g2}
The $SL(2,\bR)$--orbit closure of any unit area translation surface in $\cH(2)$ or $\cH(1,1)$ is either 
\begin{enumerate}
\item equal to a closed orbit, or 
\item equal to a locus of $(X,\omega)$ of unit area, where $\Jac(X)$ admits real multiplication with $\omega$ as an eigenform, or 
\item equal to the whole stratum. 
\end{enumerate}
\end{thm}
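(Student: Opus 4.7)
The plan is to deduce McMullen's classification from Theorem \ref{T:galois}, exploiting the fact that in genus $2$ the numerical bounds are very restrictive. Write $V = p(T(\cM))$, $k = \bk(\cM)$, $d = \dim_\bC V$, and $e = [k:\bQ]$. Theorem \ref{T:galois} provides that $V$ is simple over $k$, that $de \leq 2g = 4$, and that the Galois-conjugate decomposition $H^1 = \bigoplus_\rho \bV_\rho \oplus \bW$ is defined over $\bQ$. The class $[\omega]$ always lies in $V$ because the $GL(2,\bR)^+$-orbit direction is tangent to $\cM$, and a short argument using the complex structure yields $d \geq 2$ unless $\cM$ is a closed $SL(2,\bR)$-orbit, which is case (1).

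First I would dispose of the case $e = 2$: then $d = 2$ is forced. Since the decomposition $H^1 = \bV_{\mathrm{Id}} \oplus \bV_\rho$ is defined over $\bQ$, the idempotent projections generate an action of the order $\cO_k \subset k$ on $H^1(X,\bQ)$ compatible with the symplectic pairing. In genus $2$, such an action is exactly real multiplication on $\Jac(X)$, and because $[\omega] \in \bV_{\mathrm{Id}}$ by construction, $\omega$ is an eigenform. So $(X,\omega)$ lies in the locus of case (2). Equality of $\cM$ with the full eigenform locus then follows because both are $SL(2,\bR)$-invariant, irreducible, and of matching dimension: $\cM$ has complex dimension $3$ if we are in $\cH(2)$ (determined by $d = 2$ together with a one-dimensional relative contribution) or $4$ if we are in $\cH(1,1)$, agreeing with the known dimensions of Hilbert eigenform loci.

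Next I would handle $e = 1$, i.e., $V$ defined over $\bQ$. The goal is to force $\cM$ to be a full stratum. Here $V \subseteq H^1$ is a $\bQ$-defined simple $SL(2,\bR)$-invariant subspace of the $4$-dimensional symplectic $\bQ$-vector space $H^1(X,\bQ)$, and it contains $[\omega]$ together with its $SL(2,\bR)$-translates. A case analysis on the possible symplectic type of $V$ (Lagrangian versus symplectic versus isotropic) combined with the requirement that the complementary bundle $\bW$ contain no copy of $V_{\mathrm{Id}}$ rules out every proper subspace, so $V = H^1$. In $\cH(2)$, where $H^1_{rel} = H^1$, this immediately gives $\cM = \cH(2)$. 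In $\cH(1,1)$, the argument is completed by showing that the unique extra relative period coordinate is unconstrained: any $\bQ$-linear relation between it and the absolute periods would cut out a cover locus, and at a generic point of $\cH(1,1)$ the translation surface does not factor through such a cover, contradicting $SL(2,\bR)$-invariance.

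I expect the hardest step to be the rigidity argument at the end of Case $e=1$, ruling out proper $\bQ$-defined affine submanifolds of codimension one in $\cH(1,1)$. McMullen's original proof handled this via ergodic-theoretic input (classification of $SL(2,\bR)$-invariant measures and analysis of the period map on the Jacobian); using Theorem \ref{T:galois} one reduces the problem to a linear algebra statement about $\bQ$-defined symplectic subspaces of $H^1(X,\bQ)$ together with a Torelli-type obstruction to covers, but checking that this reduction is complete, and does not accidentally leave open some exotic $\bQ$-defined locus between a Teichm\"uller curve and the stratum, is the delicate point.
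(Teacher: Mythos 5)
This theorem is not proven in the paper at all --- it is quoted from McMullen (the citation \cite{Mc5} appears in the theorem header) as part of the discussion of previous results, and is used only to provide context for Corollary~\ref{C:generic}. So there is no ``paper's proof'' to compare against, and your task was really to decide whether this classification can be re-derived from the machinery developed here. It cannot, and your own sketch points to why.

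The key gap is in your $e=2$ case. You write that the $\bQ$-defined decomposition $H^1 = \bV_{\mathrm{Id}} \oplus \bV_\rho$ produces idempotent projections, hence an action of an order $\cO_k \subset k$ on $H^1(X,\bQ)$, and then claim that in genus $2$ ``such an action is exactly real multiplication on $\Jac(X)$'' with $\omega$ an eigenform. This step requires the flat splitting of $H^1$ to be compatible with the Hodge decomposition --- that is, the summands $\bV_\rho$ must be sub-variations of Hodge structure so that the idempotents induce endomorphisms of the Jacobian rather than merely $\bQ$-linear maps on $H^1$. The paper explicitly flags that this compatibility does \emph{not} follow from Theorem~\ref{T:galois}: ``M\"oller showed that the splitting of $H^1$ is compatible with the Hodge decomposition; this is conjectured for general affine invariant submanifolds, but this does not follow from our work'' (later resolved by Filip \cite{Fi1, Fi2}). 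Without that input you cannot conclude case (2), so your reduction of McMullen's theorem to Theorem~\ref{T:galois} breaks down at precisely the step where the geometric content enters.

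There are secondary issues as well. Your $e=1$ argument must rule out every proper $\bQ$-defined affine invariant submanifold of $\cH(1,1)$ other than the discriminant-square eigenform (torus cover) loci, and the symplectic-type classification you propose does not by itself account for the possibility of a codimension-one relative constraint combined with $p(T(\cM))=H^1$; you acknowledge this is ``the delicate point,'' but the Torelli-type obstruction you invoke is not a consequence of anything in this paper. Your dimension count for the eigenform locus in $\cH(1,1)$ is also off: $\dim\ker p = 1$ there, so $\dim_\bC T(\cM) = 3$, not $4$. Finally, the claim ``$d\geq 2$ unless $\cM$ is a closed orbit'' should read ``$d\geq 2$ always; $\cM$ is a closed orbit exactly when $\dim_\bC T(\cM)=2$.'' In short, your approach is genuinely different from McMullen's (he worked before the Eskin--Mirzakhani--Mohammadi framework existed and used ergodic-theoretic measure classification together with a detailed analysis of splittings of the Jacobian), but as a derivation from this paper's results alone it has a non-repairable gap at the Hodge-compatibility step.
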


All of the possible orbit closures (affine invariant manifolds) listed had been previously studied by both McMullen and Calta \cite{Mc, Ca}. Calta describes these orbit closures in terms of the $J$--invariant, and gives explicit linear equations defining the affine invariant manifolds \cite{Ca}.

It follows directly from this theorem of McMullen that in particular any translation surface in $\cH(2)$ or $\cH(1,1)$ whose absolute periods do not satisfy a linear relation with coefficients in $\bQ[\sqrt{d}]$ for some $d\geq 1$ is generic. McMullen's result shows that Corollary \ref{C:generic} is not sharp even in genus 2.  

Let $\cL$ be the locus of hyperelliptic translation surfaces in $\cH(2,2)$, where both of the singularities are fixed by the hyperelliptic involution. Examples of generic translation surfaces have been constructed in  $\cL$ by Hubert-Lanneau-M\"oller \cite{HLM-AY, HLM-Q, HLM-S} and also in $\cH^{hyp}(4)$ by Nguyen \cite{N}. These results in $\cL$ and $\cH ^{hyp}(4)$ are complementary to ours; they do not provide a full measure set, but do provide many especially interesting and important examples not covered by our results.

\begin{thm}[\cite{HLM-S}, Thm 0.2]
Suppose $(X,\omega)\in\cL$ is obtained by the Thurston-Veech construction, has cubic trace field, and has a completely periodic direction that is not parabolic. Then $(X,\omega)$ is $\cL$--generic. 
\end{thm}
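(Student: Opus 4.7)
The plan is to let $\cM \subseteq \cL$ denote the $SL(2,\bR)$-orbit closure of $(X,\omega)$, which by \cite{EM, EMM} is an affine invariant submanifold, and eliminate every possibility except $\cM = \cL$. First, the hypothesis that $(X,\omega)$ has a completely periodic direction which is not parabolic implies that $(X,\omega)$ is not a Veech surface, since a Veech surface has a parabolic element in its affine group in every periodic direction. Hence $\cM$ is not a Teichm\"uller curve, and $\dim_\bC \cM \geq 3$.

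Next I would combine the cubic trace field hypothesis with Theorems \ref{T:main} and \ref{T:galois}. Let $K$ be the holonomy field of $(X,\omega)$, so $[K:\bQ]=3$. Theorem \ref{T:main} gives $\bk(\cM)\subseteq K$, so either $\bk(\cM)=\bQ$ or $\bk(\cM)=K$. In the latter case, Theorem \ref{T:galois} yields
\[\dim_\bC p(T(\cM)) \cdot 3 \;\leq\; 2g = 6.\]
Since $p(T(\cM))$ carries a non-degenerate symplectic form and contains the complex plane spanned by $[\Re\omega]$ and $[\Im\omega]$, it has complex dimension at least $2$, so equality holds and $\cM$ is rank one. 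The direct sum decomposition of $H^1$ in Theorem \ref{T:galois} then presents every surface in $\cM$ as a $K$-eigenform on a Jacobian factor. I would rule this alternative out by showing it is incompatible with the Thurston-Veech and non-parabolic hypotheses: in a rank-one eigenform locus for a cubic field, the $K$-action on homology relates the holonomies of the cylinders in any periodic direction in a way that forces their moduli to be commensurable, yielding a parabolic in the affine group.

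The remaining case $\bk(\cM)=\bQ$ is where I expect the main obstacle. Here the Galois-theoretic bound of Theorem \ref{T:galois} is vacuous, and one must exhibit enough rational tangent directions in $\cM$ to rule out proper $\bQ$-defined affine invariant submanifolds of $\cL$ through $(X,\omega)$, such as loci of covers of lower-genus abelian or quadratic differentials. My plan is to exploit the Thurston-Veech construction directly: by hypothesis $(X,\omega)$ is built from two transverse multicurves whose commuting Dehn multitwists supply parabolic affine automorphisms in two transverse directions. These parabolics and their explicit cylinder decompositions, combined with hyperellipticity and the dimension of $\cL$, should produce enough tangent motion inside $\cM$ to force $\cM=\cL$; the detailed cylinder-by-cylinder analysis of which covering loci can accommodate the Thurston-Veech decomposition in the presence of a non-parabolic third periodic direction is the technical heart of the argument.
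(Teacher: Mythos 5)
This theorem is stated in the \emph{Previous results} subsection of the paper and is explicitly attributed to Hubert--Lanneau--M\"oller, \cite{HLM-S}, Thm.\ 0.2. The paper supplies no proof of it; indeed, the surrounding text says ``We do not use any of these results, and so omit some definitions,'' and a few lines later remarks that the results of McMullen, Hubert--Lanneau--M\"oller, and Nguyen ``all rely on explicit decompositions of the translation surface into simple pieces such as tori,'' in deliberate contrast with the global Galois-theoretic arguments of the present paper. So there is no proof in the paper to compare against, and what you have written is not a reconstruction of anything here but an independent attempt to re-derive the cited result from Theorems~\ref{T:main} and~\ref{T:galois}.

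Evaluated as an independent sketch, your proposal has gaps in both branches. In the branch $\bk(\cM)=K$ (cubic), the step from the direct sum decomposition of $H^1$ to ``every surface in $\cM$ is a $K$-eigenform on a Jacobian factor'' requires the splitting to be compatible with the Hodge decomposition; the paper states explicitly that this compatibility was only conjectural at the time (it was established later by Filip), so you cannot invoke it. More seriously, the claim that a rank-one locus forces the cylinder moduli in a periodic direction to be commensurable is not true in general for rank one with positive rel, which is exactly the situation your dimension count lands you in (rank 1, rel 1, so $\dim_\bC\cM=3$): the quadratic eigenform loci $\Omega E_D(1,1)$ in genus~2 are rank one with rel one and contain completely periodic surfaces that are not parabolic. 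You have restricted your claim to cubic fields, but no argument specific to the cubic arithmetic is given, so this case is not closed. Finally, you acknowledge yourself that the branch $\bk(\cM)=\bQ$, where the degree bound in Theorem~\ref{T:galois} is vacuous and one must exclude every $\bQ$-defined proper affine invariant submanifold of $\cL$, is ``the main obstacle''; as written, nothing rules it out. The actual proof in \cite{HLM-S} proceeds by very different means --- an explicit analysis of cylinder decompositions and the induced dynamics --- which this paper references but does not reproduce or subsume.
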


In particular, there are $\cL$--generic translation surfaces whose periods lie in a cubic field (that is, there is a real cubic field $\bk$ so that the period coordinates lie in $\bk[i]$). In private communication Erwan Lanneau has indicated to the author that some of the results in \cite{HLM-S} can be extended to certain strata in higher genus.

\begin{thm}[\cite{N}, Thms 1.1, 1.2, Cor. 1.3]
Every translation surface in $\cH^{hyp}(4)$ admits a specific decomposition into parallelograms and cylinders. When two edges are assumed to be parallel, then for an explicit generic subset of the remaining parameters, the translation surface is generic. In particular, in $\cH^{hyp}(4)$ there are generic surfaces arising from the Thurston-Veech construction, and generic surfaces with period coordinates in a quadratic field.

Any surface in $H^{hyp}(4)$ with a completely periodic direction consisting of three cylinders whose moduli are independent over $\bQ$ is generic. 
\end{thm}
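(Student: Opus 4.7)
Let $\cM$ denote the $SL(2,\bR)$--orbit closure of $(X,\omega)$. By Eskin--Mirzakhani--Mohammadi, $\cM$ is an affine invariant submanifold of $\cH^{hyp}(4)$; the goal is to show $\cM = \cH^{hyp}(4)$. The plan is to rule out every proper affine invariant submanifold of $\cH^{hyp}(4)$ passing through $(X,\omega)$, by combining the field of definition bounds of Theorem \ref{T:main} and Theorem \ref{T:galois} with the hyperelliptic involution structure and a cylinder--shear argument driven by the horocycle flow.

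First I would exploit the hyperelliptic involution $\tau$, which acts on every $(Y,\eta)\in\cH^{hyp}(4)$ and therefore preserves $\cM$. Its pullback splits $H^1(Y,\bC)$ into $\pm$--eigenspaces of dimension $g=3$, and $p(T(\cM))$ is forced to lie in the anti--invariant summand $H^1_-$. Combined with the inequality $\dim_\bC p(T(\cM)) \cdot \deg_\bQ \bk(\cM) \leq 2g$ of Theorem \ref{T:galois} and the combinatorics of $\cH^{hyp}(4)$, this restricts a proper $\cM$ to a short list: essentially an eigenform locus for real multiplication on a rank--two summand of the Jacobian, or a Teichm\"uller--curve--type subvariety.

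For the second step I would use horocycle--invariant shear classes: since $\cM$ is horocycle invariant, each horizontal cylinder $C_i$ has an associated shear class $s_{C_i}\in T_{(X,\omega)}\cM$, and any linear equation cutting out $\cM$ in period coordinates, evaluated on the $s_{C_i}$, becomes a relation among the circumferences and heights of the three cylinders. After using the automatic $\bQ$--linear relations among the circumferences imposed by the three--cylinder combinatorics, every such relation converts to a nontrivial $\bQ$--linear relation among the moduli $m_i = h_i/w_i$. Under the hypothesis that $m_1, m_2, m_3$ are independent over $\bQ$, this forces all defining equations to vanish on $\mathrm{span}\{s_{C_1},s_{C_2},s_{C_3}\}$, which together with the $SL(2,\bR)$--action saturates $p(T(\cM))$ to dimension $3 = g$; Theorem \ref{T:galois} then forces $\bk(\cM)=\bQ$, and a direct check against the short list from the previous paragraph gives $\cM=\cH^{hyp}(4)$, so Corollary \ref{C:generic} concludes that $(X,\omega)$ is generic. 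The main obstacle is the bookkeeping in the second step: identifying exactly which $\bQ$--linear combinations of the shears $s_{C_i}$ are tangent to a putative proper $\cM$, and ensuring that the unavoidable algebraic relations among the circumferences do not trivialize the translation from period relations to moduli relations.
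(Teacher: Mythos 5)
This statement is not proven in the paper at all: it is a quoted result of Nguyen, cited as \textbf{[N, Thms 1.1, 1.2, Cor. 1.3]} in the ``Previous results'' section, where the paper explicitly says ``We do not use any of these results.'' Nguyen's actual proof is an explicit flat-geometry argument, decomposing surfaces in $\cH^{hyp}(4)$ into parallelograms and cylinders and analyzing periodic directions directly; it does not go through the field-of-definition machinery of this paper, and the paper characterizes Nguyen's results as \emph{complementary} (giving explicit examples, not full-measure sets). So your proposal is attempting to re-derive a cited background theorem using the paper's own main theorems, which is a genuinely different route from the one in the cited reference.

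Beyond this structural mismatch, there are concrete gaps in the argument. First, the claimed splitting of $H^1(Y,\bC)$ into $\pm$--eigenspaces of dimension $3$ under the hyperelliptic involution is false: for a hyperelliptic curve, the involution acts as $-\Id$ on \emph{all} of $H^1$ (the quotient has genus $0$), so the anti-invariant part is the whole thing and no restriction on $p(T(\cM))$ is obtained this way. Second, the assertion that each horizontal cylinder $C_i$ produces a shear class $s_{C_i}\in T_{(X,\omega)}\cM$ is not something horocycle invariance gives you; horocycle invariance only puts the simultaneous shear (weighted by moduli) in the tangent space, and the stronger statement that shearing cylinders in suitable subcollections stays in $\cM$ is the Cylinder Deformation Theorem, a nontrivial result established by the author in later work, not available here. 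Third, the inequality $\dim_\bC p(T(\cM))\cdot\deg_\bQ\bk(\cM)\le 2g=6$ does not by itself produce a ``short list'' of proper $\cM$ in $\cH^{hyp}(4)$: for instance $\dim=4,\ \deg=1$ is allowed, and ruling out such rank-one rel-deficient submanifolds (or hyperelliptic loci of covers) requires additional geometric input. So even setting aside the attribution issue, the proposal does not go through as written.
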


These results of McMullen, Hubert-Lanneau-M\"oller and Nguyen all rely on explicit decompositions of the translation surface into simple pieces such as tori. Sufficiently simple decompositions are not available for the generic translation surface in high genus. 

In the case that $\cM$ is two complex dimensional (i.e., corresponds to a closed $SL(2,\bR)$--orbit), then every translation surface in $\cM$ is $\cM$--generic. See \cite{W2} for a list of known closed $SL(2,\bR)$--orbits; additional examples arise from covering constructions. 


\bold{Algebraically primitive Teichm\"uller curves.} There are infinitely many algebraically primitive Teichm\"uller curves in $\cH(2)$, which were constructed by McMullen and Calta \cite{Ca, Mc}. McMullen showed that there is only one algebraically primitive Teichm\"uller curve in $\cH(1,1)$ \cite{Mc4}.

Finiteness of algebraically primitive Teichm\"uller curves is known in $\cH^{hyp}(g-1,g-1)$ by work of M\"oller \cite{M3}, and in $\cH(3,1)$ by work of Bainbridge-M\"oller \cite{BaM}. 

\subsection{Tools and motivation.} That affine invariant submanifolds are defined over number fields is easier than the other statements of Theorem \ref{T:main}. This can be proved using only a Closing Lemma for Teichm\"uller geodesic flow. We sketch this approach briefly in Section \ref{S:closed}. The result also follows, together with the bound on the degree of the field of definition, from the inequality in Theorem \ref{T:galois}. 

Both Hamenst\"adt \cite{Ha} and Eskin-Mirzakhani-Rafi \cite{EMR} have proven Closing Lemmas for Teichm\"uller geodesic flow. The version we require is extremely close to that given in \cite{EMR}. The Closing Lemma is also used in the proof of Theorem \ref{T:galois}. 

Theorem \ref{T:galois} also uses a result of Avila-Eskin-M\"oller.

\begin{thm}[\cite{AEM}, Theorem 1.5]\label{T:AEM}
The bundle $H^1$ over $\cM$ is semisimple. That is, every flat subbundle has a flat complement.
\end{thm}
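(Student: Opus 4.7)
The plan is to build a flat complement using Hodge theory and to check flatness via a dynamical argument based on Forni's variational formula for the Hodge norm along the \Tm geodesic flow.

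First, I would equip $H^1$ with the Hodge inner product $\langle\cdot,\cdot\rangle_H$ coming fiberwise from the Hodge decomposition $H^1(X,\bC) = H^{1,0}(X) \oplus H^{0,1}(X)$, which is positive definite and varies smoothly over $\cM$. Given any flat subbundle $V \subset H^1$, the fiberwise Hodge-orthogonal complement $V^{\perp_H}$ is then a well-defined smooth subbundle satisfying $H^1 = V \oplus V^{\perp_H}$. The entire theorem reduces to showing that $V^{\perp_H}$ is itself preserved by the Gauss-Manin connection, equivalently that the second fundamental form $A$ of $V$ in $H^1$, measured with respect to the Hodge metric, vanishes identically.

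To force $A\equiv 0$ I would apply Forni's identity, which expresses the infinitesimal variation of the Hodge norm of a flat section along the \Tm geodesic flow as an explicit Hodge-theoretic quadratic expression involving the Kontsevich-Zorich cocycle. Evaluating this identity on local flat sections of $V$ and of a candidate complement, integrating against the $SL(2,\bR)$-invariant measure on $\cM_1$, and using the flatness of $V$ under Gauss-Manin, one should find that an integral of $\|A\|^2$ against a positive weight vanishes, which by smoothness forces $A\equiv 0$. Once $A=0$, the subbundle $V^{\perp_H}$ is Gauss-Manin parallel and yields the required flat complement.

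The main obstacle is the integration-and-vanishing step: one must choose the right test sections, justify the exchange of integrals and time derivatives on the non-compact space $\cM_1$, and establish the necessary integrability of Hodge-theoretic quantities (the Eskin-Mirzakhani integrability estimates would be well-suited to this). Carrying out these estimates rigorously and extracting pointwise vanishing of the second fundamental form from an averaged identity is the technical heart of the Avila-Eskin-M\"oller argument; the preparatory steps using the flat intersection form $\omega$ (so that $V^{\perp_\omega}$ is automatically flat when $V$ is symplectic) merely dispense with the easy case and do not themselves resolve the isotropic obstruction.
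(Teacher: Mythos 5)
Your proposal aims to re-prove the substantive Avila--Eskin--M\"oller result from scratch rather than prove what the paper actually proves, and even of that underlying result it remains a sketch with the decisive step left open. In the paper, Theorem~\ref{T:AEM} is essentially a quotation of \cite[Theorem~1.5]{AEM}, which establishes semisimplicity for the \emph{real} bundle $H^1_\bR$; the paper's entire proof is a short, formal reduction from complex to real: flat bundles correspond to representations of $\pi_1(\cM)$, the cited result decomposes the real representation into irreducibles with no real invariant subspaces, and the complexification of such a real irreducible is either simple or the direct sum of two complex-conjugate simple factors, so $H^1 = H^1_\bR \otimes_\bR \bC$ is semisimple. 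That is the whole argument; the dynamical content is taken as a black box.

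Your approach, by contrast, equips $H^1(X,\bC)$ with the Hodge metric and tries to show the second fundamental form of an arbitrary flat subbundle vanishes, via Forni's variational formula and an averaging argument over $\cM_1$. This is indeed a rough outline of the strategy carried out \emph{inside} \cite{AEM} (there for the real Hodge norm on $H^1_\bR$), but it is far heavier machinery than anything the paper invokes, and you yourself leave the ``integration-and-vanishing'' step unresolved. That step \emph{is} the theorem: it requires integrability of the Kontsevich--Zorich cocycle and the relevant Hodge-theoretic quantities against the affine measure on the non-compact space $\cM_1$, control near the cusps, justification for exchanging derivatives and integrals, and a genuine argument converting averaged vanishing of $\|A\|^2$ into pointwise vanishing. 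Until those are supplied, the proposal has a real gap precisely at its core. If the intent was instead to prove the stated theorem \emph{assuming} the cited result of Avila--Eskin--M\"oller (as the paper does), then the correct and much shorter proof is the complexification observation above, and the Forni-identity machinery in your sketch is entirely unnecessary.
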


In fact, Avila-Eskin-M\"oller prove this for the bundle $H^1_\bR$ whose fiber over $(X,\omega)$ is $H^1(X,\bR)$. The result follows for $H^1=H^1_\bR \otimes_\bR \bC$ from general principles, see Section \ref{S:simple}.

That affine invariant submanifolds are defined over a number field is motivated by the expectation that they have a fairly rigid algebro-geometric structure. The use of the Closing Lemma in the proof is motivated by the fact that, up to scaling, the real and imaginary parts of a translation surface on a closed orbit for the Teichm\"uller geodesic flow have period coordinates in a number field.   

That the field of definition of $p(T(\cM))$ is equal to that of $T(\cM)$ is motivated by the fact that for any $(X,\omega)$ having a hyperbolic affine diffeomorphism, and in particular any $(X,\omega)$ lying on a closed $SL(2,\bR)$--orbit, the absolute and relative periods span the same $\bQ$--vector subspace of $\bC$  \cite{KS, Mc6}. 

\bold{Organization.} Section \ref{S:affine} contains definitions, and Section \ref{S:gen} proves Corollary \ref{C:generic} using Theorem \ref{T:main}. Section \ref{S:closed} discusses the Closing Lemma, which is used to prove the ``Simplicity Theorem" in Section \ref{S:simple}. The Simplicity Theorem includes the statement that $p(T(\cM))$ is simple from Theorem \ref{T:galois}.  The remainder of Theorem \ref{T:galois} is established in Section \ref{S:mon}, using the Simplicity Theorem and the Closing Lemma. Theorem \ref{T:main} is proven in Section \ref{S:hol}, using Theorem \ref{T:galois}. Theorem \ref{T:primequi} is covered in the final section. 

\bold{Acknowledgements.} Special thanks go first of all to the author's thesis advisor Alex Eskin, for inspiring this work and generously contributing ideas at several stages. Special thanks also go to Maryam Mirzakhani for sharing her conjectures and allowing us to reproduce them here, and to Jon Chaika for helpful conversations during the early stages of this work.

The author is grateful to Jayadev Athreya, David Aulicino, Matt Bainbridge, Simion Filip, Ursula Hamenst\"adt, Erwan Lanneau, Howard Masur, Carlos Matheus, Martin M\"oller, Ronen Mukamel, Duc-Manh Nguyen, Kasra Rafi, John Smillie, Anton Zorich and many other members of the translation surfaces community for helpful and enjoyable conversations, and in some cases also for feedback on earlier drafts. The author is grateful to the referee for helpful comments.

Part of this research was conducted while the author was partially supported by the National Science and Engineering Research Council of Canada.


\section{Period coordinates and field of definition}\label{S:affine}

Suppose $g\geq 1$ and let $\alpha$ be a partition of $2g-2$. The stratum $\cH(\alpha)$ is defined to be the set of $(X,\omega)$ where $X$ is a genus $g$ closed Riemann surface, and $\omega$ is a holomorphic 1-form on $X$ whose zeroes have multiplicities given by $\alpha$. In fact, we immediately replace $\cH(\alpha)$ by a finite cover $\cH$ which is a manifold instead of an orbifold.

Given a translation surface $(X,\omega)$, let $\Sigma\subset X$ denote the set of zeros of $\omega$. Pick any basis $\{\xi_1, \ldots, \xi_n\}$ for the relative homology group $H_1(X,\Sigma; \bZ)$. The map $\Phi:\cH\to \bC^n$ defined by 
\[\Phi(X,\omega)=\left( \int_{\xi_1} \omega, \ldots, \int_{\xi_n}\omega \right)\]
defines local \emph{period coordinates} on a neighborhood $(X,\omega)\in \cH$. We may also refer to the \emph{absolute period coordinates} of a translation surface: these are the integrals of $\omega$ over a basis of absolute homology $H_1(X,\bZ)$. 

Period coordinates provide $\cH$ with a system of coordinate charts with values in $\bC^n$ and transition maps in $SL(n,\bZ)$. An \emph{affine invariant submanifold} of $\cH$ is  an immersed manifold $\cM\hookrightarrow \cH$ such that each point of $\cM$ has a neighborhood whose image is locally defined by real linear equations in  period coordinates. The possibility that $\cM$ might be immersed instead of embedded will not cause us any problems. (There are two ways to see that immersions do not cause problems for our arguments. First, one can simply phrase the arguments in $\cM$ instead of in its image in the stratum. This involves only notational changes. Second, \cite{EMM} gives that $\cM$ is embedded away from a closed locus of measure zero, and this locus can be avoided in our arguments.) We will typically treat $\cM$ as embedded for notational simplicity. All linear equations in this paper are assumed to be homogeneous, i.e. have constant term 0.

Given any subspace $V\subset \bC^n$, one can define its \emph{field of definition} as the unique subfield $\bk\subset \bC$ so that $V$ can be defined by linear equations with coefficients in $\bk$ but $V$ cannot be defined by linear equations with coefficients in a proper subfield of $\bk$. We require that the variables defining these linear equations are the coordinates of $\bC^n$.

\begin{lem}
The field of definition of any subspace $V\subset \bC^n$ is well-defined, and furthermore it is the smallest subfield of $\bC$ such that $V$ is spanned by vectors with coordinates in this subfield. 
\end{lem}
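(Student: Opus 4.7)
The plan is to reduce both assertions to a single fact about reduced row echelon form (RREF). For a subspace $U\subset\bC^n$, call a subfield $\bk\subset\bC$ a \emph{spanning field} for $U$ if $U$ is spanned (over $\bC$) by vectors whose coordinates lie in $\bk$, equivalently $U=(U\cap\bk^n)\otimes_\bk \bC$. The key auxiliary fact, which I would prove first, is that there is a unique smallest spanning field for $U$, namely the subfield $\bk_U\subset\bC$ generated by the entries of the RREF of any basis of $U$. This holds because RREF is intrinsic: the RREF of a basis of $U$ depends only on $U$, not on the chosen basis nor on the ambient field. So if $U$ has a basis in $\bk^n$, row-reducing it inside $\bk$ produces the same matrix as row-reducing any $\bC$-basis of $U$. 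Hence every spanning field contains the RREF entries, while the RREF rows themselves visibly span $U$ over $\bk_U$.

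Next I would apply this auxiliary fact to the annihilator $W=V^\perp\subset(\bC^n)^*$. By the definition quoted in the statement, a subfield $\bk\subset\bC$ defines $V$ by linear equations if and only if $W$ has a $\bk$-basis, that is, $\bk$ is a spanning field for $W$. Thus the field of definition of $V$ is exactly $\bk_W$, which exists and is unique by the previous paragraph; this proves well-definedness.

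To finish, I would show $\bk_V=\bk_W$, which gives the second assertion. If $\bk$ is a spanning field for $V$, take a basis of $V\cap\bk^n$, extend it to a basis of $\bk^n$, and form the dual basis; dualizing is inversion of a matrix over $\bk$ and so stays in $\bk^n$, producing linear functionals with coefficients in $\bk$ that cut out $V$. Hence every spanning field for $V$ is a field of definition for $V$, so $\bk_W\subset\bk_V$. Swapping the roles of $V$ and $W$ (using that $V$ is the annihilator of $W$ under the canonical pairing) gives the reverse inclusion $\bk_V\subset\bk_W$.

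The whole argument is formal linear algebra; the only delicate point, which I would state carefully but not belabor, is the canonicity of RREF under field extension, since this is what underpins the existence of a unique smallest spanning field. Everything else reduces to inverting matrices and reading off kernels, operations that do not enlarge the coefficient field, so I do not expect a genuine obstacle.
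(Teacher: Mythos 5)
Your proposal is correct and takes essentially the same approach as the paper: the paper's one-line proof says the field of definition is the field generated by the entries of the reduced row echelon form of any defining system, and your argument is a careful fleshing-out of that same RREF-canonicity idea, together with the straightforward annihilator duality needed to pass between ``defined by equations over $\bk$'' and ``spanned over $\bk$.''
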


\begin{proof}
Left to the reader. In fact the field of definition is the field generated by the coefficients in the row reduced echelon form of any system of linear equations defining $V$.
\end{proof}

Given an affine invariant submanifold $\cM$, we define its \emph{field of definition} $\bk(\cM)$ to be the field of definition of the linear subspace of $\bC^n$ that defines $\cM$ in period coordinates. Since the transition maps are in $SL(n,\bZ)$, this does not depend on which coordinate chart is used. 

Given a translation surface $(X,\omega)$, its \emph{holonomy} is the subset $\Lambda\subset \bC$ of the holonomies of all saddle connections. (A saddle connection is a straight line between two singularities on the translation surface, and the holonomy of a saddle connection is the integral of $\omega$ over the relative homology class of this line.) Similarly the \emph{absolute holonomy} is the subset $\Lambda_{abs}\subset \bC$ of the holonomies of all saddle connections representing absolute homology classes.

Given $e_1, e_2 \in \Lambda_{abs}$ linearly independent over $\bR$, the \emph{holonomy field} of $(X,\omega)$ is the smallest subfield $\bk\subset \bR$ so that $\Lambda_{abs}\subset \bk e_1 \oplus \bk e_2$. This does not depend on $e_1, e_2$. The definition of holonomy field is due to Kenyon-Smillie \cite{KS}. 

Note in particular, that for some $A\in GL(2,\bR)$, we have $Ae_1=1$ and $Ae_2=i$. Hence $A\Lambda_{abs} \subset \bk[i]$, and in particular all of the absolute period coordinates of $A(X,\omega)$ lie in $\bk[i]$. 


\section{Generic translation surfaces}\label{S:gen}

In this section we prove Corollary \ref{C:generic} assuming Theorem \ref{T:main}. First we must give the definitions. 

We say that a translation surface $(X,\omega)\in \cM_1$ has \emph{$\cM$--special periods} if there is some subfield $\bk$ of the holonomy field of $(X,\omega)$, such that 
\begin{enumerate}
\item $\bk$ has degree at most the genus $g$ of $(X,\omega)$, and 
\item $\bk(\cM)\subset \bk$, and
\item there is some linear equation on local period coordinates with coefficients in $\bk$ which does not hold identically on $\cM$, but nonetheless holds at $(X,\omega)$. 
\end{enumerate}

A translation surface is said to have \emph{$\cM$--typical periods} if it does not have $\cM$--special periods.

\begin{proof}[\textbf{\emph{Proof of Corollary \ref{C:generic}}}]
In local period coordinates, $\cM$ is some $\dim_\bC \cM$--dimensional subspace $V$ of $\bC^n$. The unit area translation surfaces in $V$ form a hypersurface, and the natural invariant volume on $\cM_1$ is up to scaling the disintegration of Lebesgue measure on $V$ \cite{EM}. 

There are only countably many linear equations on $V$ which are nonzero on $V$ and which have coefficients in a number field of degree at most $g$. Their union is measure zero, and their complement is contained in $\cG$ in local period coordinates. Hence $\cG$ has full measure. 

Consider any translation surface $(X,\omega)$ in $\cG$. Its orbit closure is some affine invariant manifold $\cN\subset \cM$, which is defined in local period coordinates by linear equations with coefficients in $\bk=\bk(\cN)$. By Theorem \ref{T:main}, $\bk(\cN)$ is contained in the holonomy field of $(X,\omega)\in \cN$, and is a field of degree at most $g$. Furthermore, since $\bk(\cM)$ is the intersection of the holonomy fields of surfaces in $\cM$, and $\bk=\bk(\cN)$ is the intersection of the holonomy field of surfaces in $\cN\subset \cM$, we see that $\bk\supset \bk(\cM)$. 

However, the coordinates of $(X,\omega)$ by assumption do not satisfy any linear equations with coefficients in such a field $\bk$ that do not hold identically on $\cM$. Hence $\cN=\cM$.  
\end{proof}


\section{Closed orbits for the Teichm\"uller geodesic flow}\label{S:closed}

The Teichm\"uller geodesic flow is given by 

\[g_t=\left(\begin{array}{cc} e^t& 0 \\0& e^{-t}\end{array}\right)\subset SL(2,\bR).\]

\subsection{Closed orbits are abundant.} The following Closing Lemma is due to Eskin-Mirzakhani-Rafi \cite[Section 9]{EMR} using results of Forni. See also the Closing Lemma of Hamenstadt  \cite[Section 4]{Ha}. 

 \begin{lem}[Closing Lemma] 
Let $\cK$ be  an arbitrary compact subset of an affine invariant submanifold $\cM$. Given any open set $U'\subset \cM$ that intersects $\cK$, there is a smaller open set $U\subset U'$ and a constant $L_0>0$ with the following property. 

Assume that $\bg:[0,L]\to \cM$ is a segment of a $g_t$--orbit (parameterized by $t$) such that the following three conditions hold:
\begin{enumerate}
\item $\bg(0), \bg(L)\in U$, 
\item $L>L_0$, 
\item $|\{t\in [0,L]| \bg(t)\in \cK\}| > L/2$. 
\end{enumerate}
Then there exists a closed $g_t$--orbit that intersects $U'$. 
\end{lem}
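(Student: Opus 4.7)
The plan is a version of the Anosov closing lemma, executed in period coordinates where $g_t$ acts linearly; hypothesis (3) will supply the uniform control needed to perform the argument inside a single chart of definite size.

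First I would fix a basepoint $(X_0,\omega_0)\in U'\cap\cK$ and work in a period-coordinate chart $\Phi$ near it, shrinking $U\subset U'$ so that $\Phi$ is defined (with controlled distortion) on a neighborhood of $U$. Because $\cM$ is cut out by real linear equations in period coordinates, its tangent space $V\subset\bC^n$ splits as $V=V^+\oplus V^-$, with $V^+$ a real subspace of $\bR^n$ and $V^-$ its multiplication by $i$; the flow $g_t$ acts as $z_j=x_j+iy_j\mapsto e^tx_j+ie^{-t}y_j$ on each coordinate, hence as multiplication by $e^t$ on $V^+$ and by $e^{-t}$ on $V^-$.

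Second, set $\delta=\Phi(\bg(L))-\Phi(\bg(0))\in V$, which is small because both endpoints lie in $U$. I would find a small perturbation $v\in V$ and time correction $s\in\bR$ such that $x=\Phi^{-1}(\Phi(\bg(0))+v)$ satisfies $g_{L+s}(x)=x$. Such an $x$ lies in $\cM$ because $\Phi(\bg(0))+v\in V$ by construction, and its $g_t$-orbit is therefore closed and meets $U'$. The correction $s$ is chosen to kill the flow-direction component, and the remaining linear equation $(g_{L+s}-I)v=-\delta$ decouples as $(e^{L+s}-1)v^+=-\delta^+$ on $V^+$ and $(e^{-(L+s)}-1)v^-=-\delta^-$ on $V^-$, solvable directly.

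The main obstacle, and the reason hypothesis (3) is essential, is that the Teichm\"uller flow on $\cM$ is only non-uniformly hyperbolic at the level of intrinsic geometry: period-coordinate charts have size comparable to the injectivity radius of the basepoint, and the distortion between period coordinates and the Teichm\"uller metric grows unboundedly as one approaches the boundary of moduli space. Without a compactness hypothesis one cannot ensure that the fixed point $x$ lies within a neighborhood where $\Phi$ has controlled distortion, nor translate the period-coordinate linear analysis into a genuine closed orbit near $U'$. Hypothesis (3) fixes this: the effective hyperbolicity and distortion estimates of Forni and Athreya-Forni used in \cite[Section 9]{EMR} imply that any orbit segment spending at least half its time in $\cK$ enjoys uniform chart-size and distortion bounds with constants depending only on $\cK$. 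Choosing $L_0$ large and $U$ small in terms of these constants then makes the closing argument go through, producing the required closed orbit through $U'$.
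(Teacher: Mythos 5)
Your high-level plan — an Anosov-style closing argument, exploiting the fact that $g_t$ acts linearly and diagonally in period coordinates, with condition (3) compensating for the non-uniform hyperbolicity of the Teichm\"uller flow — is the right frame, and it matches the spirit of the reference \cite{EMR} that the paper actually points to. The paper itself does not reprove the lemma; it cites \cite[Section 9]{EMR} and argues that the two ingredients needed there (local product structure and accumulated hyperbolicity) carry over to affine invariant submanifolds because the invariant subspace $V$ is real-linear, and the proof proceeds by a contraction-mapping argument on the stable and unstable foliations. Your proposal instead tries to carry out the fixed-point analysis directly in a single period chart, and it is at this step that there is a real gap.

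The problem is your fixed-point equation. You write $\delta=\Phi(\bg(L))-\Phi(\bg(0))$ and claim the condition $g_{L+s}(x)=x$ reduces (after killing the flow direction) to $(g_{L+s}-I)v=-\delta$, which then decouples into the \emph{scalar} equations $(e^{L+s}-1)v^+=-\delta^+$ and $(e^{-(L+s)}-1)v^-=-\delta^-$. This silently assumes that the first return of the orbit to the chart is given by the diagonal matrix $\operatorname{diag}(e^{L},e^{-L})$ alone. It is not: the orbit $\bg([0,L])$ leaves the chart and comes back, and when you compare $\Phi(\bg(L))$ with $\Phi(\bg(0))$ in a common chart you pick up the monodromy matrix $B\in GL(n,\bZ)$ of the flat bundle $H^1_{rel}$ along the approximate loop. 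The correct linear model for the return map is $B\,\operatorname{diag}(e^{L+s},e^{-(L+s)})$ restricted to $V$, and the fixed-point equation becomes $(B\,g_{L+s}-I)\,v=-\tilde\delta$. This is a matrix equation, not a pair of scalar equations; its solvability \emph{with small solution} is exactly the content of the closing lemma and is not automatic. One needs to know that $B\,e^{L+s}$ restricted to $V^+$ is genuinely expanding (all singular values uniformly bounded below away from $1$), and dually that $B\,e^{-(L+s)}$ on $V^-$ is uniformly contracting. That is a statement about the Kontsevich--Zorich cocycle along the orbit, and it is precisely here — not merely for chart-size/distortion control — that hypothesis (3) and Forni's estimates used in \cite{EMR} enter: an orbit spending a definite fraction of its time in the compact set $\cK$ accumulates hyperbolicity at a definite rate, giving the needed spectral gap for $B\,g_{L+s}|_V$.

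A symptom of the gap: as written, your scalar equations are solvable with a small solution whenever $L$ is large and $\delta$ is small, with no hypothesis on the time spent in $\cK$. If that were correct, one would obtain a closed orbit near \emph{any} $g_t$-recurrent point in $\cM$, and condition (3) would be superfluous. That conclusion is false — the flow is only non-uniformly hyperbolic — which shows the linear model has been oversimplified. To repair the argument you would either need to carry the monodromy $B$ through the computation and invoke the cocycle estimates to control $(B\,g_{L+s}-I)^{-1}$, or (as in \cite{EMR}) abandon the explicit linear solve in favor of a contraction-mapping argument on the local stable and unstable manifolds, whose existence and uniform size on $\cK$ is what the Forni-type estimates actually provide.
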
 

\begin{proof}
The proof follows as in \cite[Section 9]{EMR}. They discuss only strata of quadratic differentials, but the key feature of local product structure (expanding and contracting foliations) is guaranteed by real linearity for any affine invariant submanifold. (The proof proceeds by applying the contraction mapping principal to the stable and unstable foliations. Thus, only the hyperbolicity of the flow is relevant. The flow ``accumulates  hyperbolicity" at a definite rate as it spends time in a fixed compact set, which is why condition 3 is required.)  
\end{proof}

In fact stronger statements are true; see the references. We have stated the Closing Lemma as is because we only need the existence of closed orbits. 

\subsection{The action on (relative) cohomology.}\label{SS:act} If $g_T(X,\omega)=(X,\omega)$, then there is an induced pseudo-Anosov diffeomorphism on $X$. The action $g_T^*$ on the relative and absolute cohomology of $X$ is via this pseudo-Anosov. 

The action of $g_T^*$ on absolute cohomology is related to its action on relative cohomology $H^1(X,\Sigma, \bC)$, where $\Sigma$ is the set of singularities. After replacing the pseudo-Anosov with a power, we may assume it fixes $\Sigma$ pointwise. In this case the pseudo-Anosov acts trivially on $\ker(p)$, and thus the action on $H^1(X,\Sigma, \bC)$ has block triangular form, where the diagonal blocks are the identity (on $\ker(p)$) and the action on absolute cohomology. 

Both $e^{-T}$ and $e^{T}$ are simple eigenvalues for $g_T^*$ \cite{Fr, FLP, Pe}, and the eigenvectors are $\Re(\omega)$ and $\Im(\omega)$. The abelian differential defines both a cohomology class and a relative cohomology class, and this statement is true in both cohomology and relative cohomology. Furthermore, we emphasize that the generalized eigenspaces of $e^{-T}$ and $e^{T}$ are one-dimensional in both cohomology and relative cohomology. 

\begin{rem}
We may now sketch a proof that every affine invariant submanifold $\cM$ is defined over a number field. Details are left to the reader, as this will be reproved in a stronger form later. 

Pick any period coordinate chart for $\cM$. It is not hard using the Closing Lemma and ergodicity to show that closed orbits are dense in $\cM_1$. ($\cK$ can chosen to be any compact set with measure greater than $0.5$, and condition 3 is verified using the Birkhoff Ergodic Theorem by letting the starting point be generic.) If $(X,\omega)$ lies on a closed orbit, then the real and imaginary parts of the period coordinates lie in a number field up to scaling, because they are eigenvectors of simple eigenvalues of an integer matrix. The integer matrix is $g_T^*:H^1(X,\Sigma, \bZ)\to H^1(X,\Sigma, \bZ)$.

The result now follows from the fact that any subspace $V\subset \bC^n$ which is spanned by points with coordinates in a number field is defined over a number field. 
\end{rem}


\section{The Simplicity Theorem}\label{S:simple}

In this section we prove that $p(T(\cM))$ is simple, using Avila-Eskin-M\"oller's Theorem (Theorem \ref{T:AEM}). First we address the difference between the statements of Theorem \ref{T:AEM} and \cite[Theorem 1.5]{AEM}.

\begin{proof}[\emph{\textbf{Proof of Theorem \ref{T:AEM}}}]
The difference between the statements is simply the difference between using real and complex cohomology. 

Flat bundles correspond to representations of the fundamental group of the base.  \cite[Theorem 1.5]{AEM} gives that the representation to real cohomology is the sum of real representations without real invariant subspaces. The complexification of any real representation without invariant subspaces is either simple or the direct sum of two complex conjugate simple representations. 
\end{proof}

In the following statement, $p(T(\cM))$ and $T(\cM)$ are regarded as  flat complex vector bundles over $\cM$.

\begin{thm}[The Simplicity Theorem]\label{T:simple}
The  bundle $p(T(\cM))$ is simple. Furthermore, $T(\cM)$  cannot be expressed as a direct sum of flat complex vector subbundles, and any proper flat subbundle of $T(\cM)$ is contained in $\ker(p)$.
\end{thm}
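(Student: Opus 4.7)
My plan is to derive a contradiction from the existence of any nontrivial proper flat subbundle by combining a \emph{variation principle} with the simple eigenvalue structure of pseudo-Anosov monodromies at closed orbits. The two main inputs are the Closing Lemma from Section \ref{S:closed} (producing a dense set of closed $g_t$-orbits in $\cM$) and Theorem \ref{T:AEM} (providing flat complements in $H^1$).

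The variation principle I will use states that if a flat subbundle $U\subseteq p(T(\cM))$ contains $[\omega]$ (or $[\Re\omega]$) at every point of $\cM$, then $U=p(T(\cM))$. In a flat trivialization of $H^1$, the section $[\omega]$ has the absolute period coordinates as components, and its derivative along any $v\in T(\cM)$ is precisely $p(v)$; a flat subbundle containing the section at every point must contain these derivatives, filling out all of $p(T(\cM))$. The $[\Re\omega]$ variant follows the same way after noting that its derivatives fill $p(T(\cM))_{\bR}$, which generates $p(T(\cM))$ over $\bC$. An analogous principle holds inside $T(\cM)$ with the tautological section $\omega$ in place of $[\omega]$.

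To prove $p(T(\cM))$ is simple, suppose $U\subsetneq p(T(\cM))$ is a proper nontrivial flat subbundle. Theorem \ref{T:AEM} applied to $H^1$ and then intersected with $p(T(\cM))$ yields a flat decomposition $p(T(\cM))=U\oplus V''$ with $V''\neq 0$. At each closed orbit $(X_c,\omega_c)$ of period $T$, the monodromy $g_T^*$ preserves this splitting, and by Section \ref{SS:act} the eigenspace for the simple eigenvalue $e^T$ is the line $\bC[\Re\omega_c]$, which therefore lies entirely in $U$ or entirely in $V''$. Hence every closed orbit belongs to the closed real-analytic subset $A\cup B$ of $\cM$, where
\[A=\{(X,\omega):[\Re\omega]\in U\},\qquad B=\{(X,\omega):[\Re\omega]\in V''\}.\]
The Closing Lemma gives that closed orbits are dense in $\cM$; since $\cM$ is a connected complex manifold and hence irreducible as a real-analytic variety, one of $A$ or $B$ must equal all of $\cM$. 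The variation principle then forces the corresponding subbundle to equal $p(T(\cM))$, contradicting properness.

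The two remaining assertions follow by closely related arguments. For indecomposability of $T(\cM)$, the same closed-orbit/density/variation scheme applies with $\Re\omega\in T(\cM)$, using that $e^T$ remains a simple eigenvalue of $g_T^*$ on $T(\cM)\subset H^1_{rel}$ (Section \ref{SS:act}). For the final claim that any proper flat subbundle $V\subsetneq T(\cM)$ lies in $\ker p$, simplicity of $p(T(\cM))$ reduces us to the case $p(V)=p(T(\cM))$; a dimension count then gives $V+\ker(p)|_{T(\cM)}=T(\cM)$, and since $\ker(p)|_{T(\cM)}$ carries trivial monodromy (being built from the locally constant sheaf $\tilde H^0(\Sigma,\bZ)$), one can produce a flat complement $K'$ of $V\cap\ker(p)|_{T(\cM)}$ inside $\ker(p)|_{T(\cM)}$, yielding a flat decomposition $T(\cM)=V\oplus K'$ that contradicts indecomposability. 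I expect the main technical obstacle to be keeping the variation principle and the real-analytic irreducibility step straight while juggling real and complex structures on $H^1$ and $T(\cM)$.
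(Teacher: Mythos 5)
Your argument for simplicity of $p(T(\cM))$ and indecomposability of $T(\cM)$ is correct but routes differently from the paper. Both use the same core observation at a closed $g_t$--orbit: $e^T$ is a simple eigenvalue, with eigenline $\bC[\Re\omega]$, so that eigenline must fall entirely into one summand of a putative flat decomposition. The paper exploits this in one shot: the set of points where both components of $[\Re\omega]$ are nonzero is the complement of finitely many proper linear subspaces, hence open and dense, so by the Closing Lemma a closed orbit sits there, immediately producing two independent eigenvectors for the simple eigenvalue. You instead argue that \emph{every} closed orbit lies in $A\cup B$, invoke density of closed orbits plus a real-analytic (measure-theoretic) irreducibility argument to force $A=\cM$ or $B=\cM$, and then close with a variation principle --- differentiating the tautological section $[\Re\omega]$ to show a flat subbundle containing it at every point must be all of $p(T(\cM))$. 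This works, and the variation principle is a clean idea, but it requires density of closed orbits (which needs ergodicity in addition to the Closing Lemma) and the analyticity step, whereas the paper only needs the Closing Lemma applied once to a judiciously chosen open set. Also worth noting: the eigenvector $\Re\omega$ belongs to the eigenvalue $e^{-T}$ of $g_T^*$, i.e.\ $e^T$ of $(g_T^*)^{-1}$; your phrasing is slightly loose, though the substance is unaffected.

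There is a genuine gap in your treatment of the last claim, that any proper flat subbundle of $T(\cM)$ lies in $\ker(p)$. You assert that $\ker(p)|_{T(\cM)}$ ``carries trivial monodromy (being built from the locally constant sheaf $\tilde H^0(\Sigma,\bZ)$).'' This is false in general: loops in $\cM$ may permute the singularities $\Sigma$, in which case the monodromy on $\tilde H^0(\Sigma,\bZ)$, and hence on $\ker(p)$, is a nontrivial permutation representation. Without trivial monodromy you cannot simply extend a pointwise complement of $V\cap\ker(p)$ inside $\ker(p)$ to a flat complement. The paper fixes this by first passing to a finite cover $\cM'$ on which the monodromy fixes $\Sigma$ pointwise, then running the complement argument there (the indecomposability statement being insensitive to finite covers). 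You need to insert that finite-cover step.
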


\begin{proof}
By Theorem \ref{T:AEM}, to show $p(T(\cM))$ is simple it suffices to show that it cannot be written nontrivially as a direct sum $p(T(\cM))=E'\oplus E''$ of two flat bundles. Suppose in order to find a contradiction that such a direct sum decomposition exists. 

Pick a small open set $U'\subset \cM$ such that, for all $v\in U'$, if we set $w=p(v)$, and write $w=w'+w''$ with $w'\in E'$ and $w''\in E''$, then the real and imaginary parts of both $w'$ and $w''$ are nonzero.  (Locally, $\cM$ looks like an open subset of a complex vector space, and the set of $v$ for which the real or imaginary part of $w'$ or $w''$ vanish is a union of four real vector subspaces.) Apply the Closing Lemma to find $v\in U'$ which generates a closed $g_t$--orbit. 

Let $\phi$ be the mapping class of $g_{T}$, where $g_{T}v=v$. Then $\phi$ is a pseudo-Anosov, and 
\[\Re(w)=\Re(g_{T}w)= e^T \phi^* \Re(w).\]
 By assumption both $w'$ and $w''$ are nonzero, and we have 
 \[\Re(w')= e^T \phi^* \Re(w') \quad \quad \text{and}\quad \quad \Re(w'')= e^T \phi^* \Re(w'').\]

However, $e^{T}$ is the unique largest eigenvalue for $(\phi^*)^{-1}$, and it is a simple eigenvalue. This contradicts the fact that we have found two eigenvectors for this eigenvalue. Hence $p(T(\cM))$ is simple. 

The same proof shows that $T(\cM)$ has no direct sum decomposition. Now we wish to show that any proper flat subbundle $B\subset T(\cM)$ is contained in $\ker(p)$. Suppose not. We must have $p(B)=p(T(\cM))$, since $p(B)$ is a nonzero flat subbundle of $p(T(\cM))$, and $p(T(\cM))$ is simple. 

Pass to a finite cover of $\cM'$ for which monodromy does not permute the singularities of flat surfaces. This passage to a finite cover does not effect any of our previous results; already we were working in an unspecified finite cover of a stratum. Lift $B$ to a flat bundle $B'$ over $\cM'$.

Now we have that $\ker(p)$ is a flat subbundle on which monodromy acts trivially. Over any fiber, pick a complement $B''\subset \ker(p)$ of $B'$. Because $\ker(p)$ is a trivial bundle, this complement extends to a flat bundle which is a complement to $B'$. This contradicts that $T(\cM)$ has no direct sum decompositions. 
\end{proof}


\section{Monodromy and Galois action}\label{S:mon}

Part of Theorem \ref{T:galois}, namely that $p(T(\cM))$ is simple, has already been established in the previous section as part of the Simplicity Theorem. In this section we use the Simplicity Theorem to establish the remainder of Theorem \ref{T:galois}.

Fix an affine invariant submanifold $\cM$. The flat bundle $H^1$ over $\cM$ corresponds to a conjugacy class of representations 
\[\pi_1(\cM) \to \End\bC^{2g} \simeq \End H^1(X, \bC)\]
and Theorem \ref{T:AEM} of Avila-Eskin-M\"oller gives that the representation is semisimple.

Furthermore, the representation comes from an integer valued representation  
\[\pi_1(\cM) \to \End\bZ^{2g} \simeq \End H^1(X, \bZ).\] 

Flat subbundles of $H^1$ correspond to subrepresentations, and the Simplicity Theorem gives that the projection of the tangent bundle $p(T(\cM))$ corresponds to an \emph{irreducible} subrepresentation, which we will call $V$. We will also consider the representation $V'$ corresponding to $T(\cM)$. 

\begin{prop}\label{P:galois}
For any affine invariant manifold, let $\bk$ be the trace field of $V$. Then $\bk=\bk(\cM)$. Furthermore, $\bk(\cM)$ is equal to the field of definition of $p(T(\cM))\subset H^1$. 
\end{prop}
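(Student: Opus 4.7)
Set $\bk_1 = \bk(\cM)$, let $\bk_2$ be the trace field of $V = p(T(\cM))$, and let $\bk_3$ be the field of definition of $V \subset H^1$. The plan is to establish the chain of inclusions $\bk_2 \subseteq \bk_3 \subseteq \bk_1 \subseteq \bk_2$. The first two inclusions are routine: the projection $p \colon H^1_{rel} \to H^1$ is induced by an integer map on homology, so the image of the $\bk_1$-defined subspace $T(\cM)$ is defined over $\bk_1$, giving $\bk_3 \subseteq \bk_1$; and after expressing $V$ in a $\bk_3$-basis written in an integer basis of $H^1$, the integer monodromy restricted to the flat subbundle $V$ has matrix entries in $\bk_3$, so its traces lie in $\bk_3$, giving $\bk_2 \subseteq \bk_3$.

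The substantive inclusion is $\bk_1 \subseteq \bk_2$, which I would prove by showing that $T(\cM) \subset H^1_{rel}$ is stable under every $\sigma \in \mathrm{Aut}(\bC/\bk_2)$ acting on coordinates in an integer basis. Such a $\sigma$ commutes with both the integer monodromy and the $\bQ$-defined map $p$, so $\sigma T(\cM)$ and $\sigma V$ are flat subbundles with $p(\sigma T(\cM)) = \sigma V$. First I would show $\sigma V = V$, the key input being that $V$ has multiplicity one as a simple subrepresentation of $H^1$. To see this, apply the Closing Lemma (Section \ref{S:closed}) to produce a closed Teichm\"uller orbit and thereby a pseudo-Anosov $\phi$; by Section \ref{SS:act}, $e^T$ is a simple eigenvalue of $\phi^*$ on $H^1$ with eigenvector $p(\Re\omega) \in V$. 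If $V$ occurred with multiplicity $n$ in $H^1$, the $e^T$-eigenspace on the $V$-isotypic component would already have dimension $n$, forcing $n = 1$. Since $\sigma$ fixes traces, $\sigma V$ has the same character as $V$ and hence is isomorphic to $V$ as a flat bundle; by multiplicity one, $\sigma V = V$, so $V$ is defined over $\bk_2$ and in particular $\bk_3 = \bk_2$.

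To upgrade from $V$ to $T(\cM)$ I would consider the flat subbundle $T(\cM) \cap \sigma T(\cM) \subseteq T(\cM)$. By the Simplicity Theorem (Theorem \ref{T:simple}), this intersection is either all of $T(\cM)$ --- in which case $T(\cM) = \sigma T(\cM)$ by equality of dimensions --- or else contained in $\ker(p)$. Ruling out the latter alternative is the main obstacle, and I would do it by again localizing at the closed orbit. Using the block-triangular action of $\phi^*$ on $H^1_{rel}$ (the quotient by $\ker(p)$ being $H^1$), together with the fact that $\phi^*|_{\ker(p)} = \mathrm{Id}$ has only eigenvalue $1 \neq e^T$, the eigenvalue $e^T$ remains simple on $H^1_{rel}$, with unique eigenvector $\Re\omega$. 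Since $T(\cM)$ and $\sigma T(\cM)$ are both $\phi^*$-invariant and project onto $V$, each must contain an $e^T$-eigenvector, which can only be $\Re\omega$. Thus $\Re\omega \in T(\cM) \cap \sigma T(\cM)$, and since $p(\Re\omega) \neq 0$ the intersection is not contained in $\ker(p)$. This forces $T(\cM) = \sigma T(\cM)$ for every $\sigma \in \mathrm{Aut}(\bC/\bk_2)$; hence $T(\cM)$ is defined over $\bk_2$ and $\bk_1 \subseteq \bk_2$, completing the proposition.
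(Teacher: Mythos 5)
Your proof is correct, and for the substantive inclusion $\bk(\cM)\subseteq\bk_2$ it follows a genuinely different route from the paper's. Both arguments begin by using the Closing Lemma to produce a pseudo-Anosov $\phi$ with simple top eigenvalue $e^T$ and eigenvector $\Re\omega$, and both invoke Bourbaki's character lemma together with the Simplicity Theorem; from there they diverge. The paper notes that each Galois conjugate $\sigma(e^T)$ is still a simple eigenvalue of the \emph{integral} matrix $A'$ acting on $H^1_{rel}$ (a simple root of an integer polynomial has simple conjugates), with eigenvector lying in $T(\cM)$, then applies the Primary Decomposition Theorem to build a $\bk$-rational projection (a polynomial in $A'$) onto the span $E$ of these eigenvectors; this produces a $\bk$-rational vector of $T(\cM)$ outside $\ker p$, whose integral-monodromy orbit spans $T(\cM)$ by the Simplicity Theorem. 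You instead argue by Galois descent: you first derive multiplicity one of $V$ in $H^1$ from simplicity of $e^T$ (a fact the paper records only in the subsequent lemma of Section \ref{S:mon}), conclude $\sigma V=V$ on the nose, and then use the Simplicity Theorem's dichotomy together with uniqueness of the $e^T$-eigenvector in $H^1_{rel}$ to force $T(\cM)=\sigma T(\cM)$ for all $\sigma\in\mathrm{Aut}(\bC/\bk_2)$. Your route avoids Primary Decomposition entirely, at the cost of one step stated a little quickly: that a $\phi^*$-invariant subspace of $H^1_{rel}$ surjecting onto $V$ must contain $\Re\omega$. That deserves a line of justification (e.g.\ the characteristic polynomial of $\phi^*$ on such a subspace is divisible by that of $\phi^*|_V$, so $e^T$ occurs there, and the entire $e^T$-eigenspace of $\phi^*$ on $H^1_{rel}$ is $\bC\cdot\Re\omega$). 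Both arguments are sound and of comparable length; yours is more in the spirit of descent, the paper's more constructive in exhibiting $\bk$-rational vectors.
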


The proof has been divided into lemmas, the first of which uses the following basic fact \cite[Cor. 3.8]{Lang}.

\begin{lem}\label{L:bour}(Bourbaki)
Let $G$ be any group (not necessarily finite). Let $V_1$ and $V_2$ be two finite dimensional semisimple representations of $G$ over a field of characteristic zero. Then $V_1$ and $V_2$ are isomorphic as representations if and only if their characters are equal.   
\end{lem}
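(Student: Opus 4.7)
The plan is to prove the nontrivial direction: equal characters imply isomorphism. The forward implication is immediate from the fact that the trace is conjugation-invariant, so I will focus on the converse.

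First I would reduce to the case of an algebraically closed base field. Extension of scalars preserves both characters and the isomorphism class of a representation (two $k[G]$-modules are isomorphic if and only if they become isomorphic after extending scalars to $\overline{k}$, by a standard descent argument in characteristic zero, or more concretely by noting that the space of $k[G]$-module homomorphisms is a $k$-subspace whose dimension equals the $\overline{k}$-dimension of $\mathrm{Hom}_{\overline{k}[G]}$). Moreover, semisimplicity is preserved under separable extensions, which is automatic in characteristic zero. So I may assume $k$ is algebraically closed.

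Next, over an algebraically closed field, each $V_i$ decomposes as a finite direct sum of irreducibles. Let $\{W_\alpha\}$ be the finite collection of isomorphism classes of irreducible constituents appearing in either $V_1$ or $V_2$, and write
\[V_1 \cong \bigoplus_\alpha W_\alpha^{m_\alpha}, \qquad V_2 \cong \bigoplus_\alpha W_\alpha^{n_\alpha}.\]
The characters then satisfy $\chi_{V_1} = \sum_\alpha m_\alpha \chi_{W_\alpha}$ and $\chi_{V_2} = \sum_\alpha n_\alpha \chi_{W_\alpha}$. The key step is to show that the characters $\chi_{W_\alpha}$ of pairwise non-isomorphic irreducibles are linearly independent over $k$; given this, $\chi_{V_1} = \chi_{V_2}$ forces $m_\alpha = n_\alpha$ for all $\alpha$, which yields $V_1 \cong V_2$.

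The main obstacle, and the heart of the argument, is the linear independence of irreducible characters, which I would derive from Burnside's theorem (the Jacobson density theorem in finite dimensions). Burnside says that for each irreducible finite-dimensional representation $W_\alpha$ over the algebraically closed field $k$, the image of the group algebra $k[G]$ in $\mathrm{End}_k(W_\alpha)$ is all of $\mathrm{End}_k(W_\alpha)$. Applying this to the direct sum $\bigoplus_\alpha W_\alpha$ and invoking Schur's lemma (so that no nontrivial intertwiners exist between distinct $W_\alpha$), a density argument shows that the image of $k[G]$ in $\mathrm{End}_k\!\left(\bigoplus_\alpha W_\alpha\right)$ is the full product $\prod_\alpha \mathrm{End}_k(W_\alpha)$. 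Consequently, for any prescribed scalars $c_\alpha \in k$ one can find an element of $k[G]$ whose image on $W_\alpha$ is the scalar multiplication by $c_\alpha$; taking traces and comparing with a hypothetical linear relation $\sum_\alpha \lambda_\alpha \chi_{W_\alpha} = 0$ forces $\lambda_\alpha \dim W_\alpha = 0$, hence $\lambda_\alpha = 0$ since $\mathrm{char}(k) = 0$. This gives the required linear independence and completes the proof.
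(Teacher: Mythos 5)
Your proof is correct. Note that the paper itself gives no argument for this lemma at all: it is quoted as a known fact, attributed to Bourbaki, with the citation \cite[Cor. 3.8]{Lang}, so there is no in-paper proof to diverge from; your argument is exactly the standard one underlying that citation --- reduce to an algebraically closed base field, decompose into irreducibles, and obtain linear independence of the irreducible characters from Burnside's theorem (Jacobson density) together with Schur's lemma, which is the right way to handle an arbitrary, possibly infinite, group $G$ where averaging/orthogonality arguments are unavailable. The only steps you assert rather than prove --- that semisimplicity and the isomorphism class of a finite-dimensional representation are insensitive to extension of scalars in characteristic zero (Noether--Deuring, or the argument you sketch via $\mathrm{Hom}$ spaces, where one should also note that $\dim V_1=\dim V_2$ is available from the characters evaluated at the identity) --- are standard facts and are correctly invoked, so the write-up stands as a complete proof of the quoted statement.
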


\begin{lem}
There is a monodromy matrix $A$ of the flat bundle $H^1$ for which the following is true. The matrix $A$ has a simple eigenvalue $\lambda$, and for every field automorphism $\sigma:\bC\to\bC$ which acts as the identity on the trace field of $V$, $\sigma(\lambda)$ is a simple eigenvalue of $A$, and the eigenvector lies in $V$. 

The same statements remain true when $H^1$ is replaced by $H^1_{rel}$, and the matrix $A$ is replaced by the corresponding monodromy matrix $A'$ for  $H^1_{rel}$: all the $\sigma(\lambda)$ are simple eigenvalues of $A'$, and the eigenvectors lie in $V'$. 
\end{lem}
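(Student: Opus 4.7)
I will produce $A$ and $A'$ via the Closing Lemma and then deduce the eigenvalue assertions from an elementary manipulation of characteristic polynomials in $\bk[t]$. Working on the finite cover of $\cM$ on which monodromy pointwise fixes $\Sigma$ (as used in the proof of the Simplicity Theorem), apply the Closing Lemma to obtain a closed geodesic $g_T(X,\omega)=(X,\omega)$ with associated pseudo-Anosov $\phi$. Set $\lambda=e^T$ and let $A$ and $A'$ denote $\phi^*$ acting on $H^1(X,\bC)$ and $H^1(X,\Sigma,\bC)$ respectively; in integer bases, both are integer matrices. Section~\ref{S:closed} furnishes the simplicity of $\lambda$ as an eigenvalue of both $A$ and $A'$, with eigenvector the (relative) cohomology class of $\Re(\omega)$; since $\cM$ is $GL(2,\bR)$-invariant this class lies in $T(\cM)=V'$, and hence in $V=p(V')$.

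The key input is to show that $\chi_{A|_V}(t)$ and $\chi_{A'|_{V'}}(t)$ both lie in $\bk[t]$. The first is routine: the coefficients of $\chi_{A|_V}(t)$ are polynomials in the power sums $\mathrm{tr}(A^n|_V)$, and each such trace is the trace of a monodromy element on $V$ and therefore lies in the trace field $\bk$ of $V$. For the second, since $\phi$ pointwise fixes $\Sigma$ on the cover, the block triangular form recorded in Section~\ref{S:closed} produces the factorization
\[\chi_{A'|_{V'}}(t) = (t-1)^{\dim(V'\cap\ker p)}\cdot\chi_{A|_V}(t),\]
so $\chi_{A'|_{V'}}(t)\in\bk[t]$ as well. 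Because $V$ and $V'$ are flat subbundles, they are $A$- and $A'$-invariant, so $\chi_{A|_V}$ divides $\chi_A$ and $\chi_{A'|_{V'}}$ divides $\chi_{A'}$; since $\chi_A$ and $\chi_{A'}$ lie in $\bZ[t]\subset\bk[t]$ and the divisors are monic in $\bk[t]$, polynomial division in $\bk[t]$ forces the cofactors $\chi_A/\chi_{A|_V}$ and $\chi_{A'}/\chi_{A'|_{V'}}$ into $\bk[t]$ as well.

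To finish, for any field automorphism $\sigma$ of $\bC$ fixing $\bk$, applying $\sigma$ coefficientwise fixes every polynomial in $\bk[t]$ and hence preserves the multiplicity of each of its roots. Since $\lambda$ is simple in $\chi_A$ and already appears as an eigenvalue of $A|_V$, it has multiplicity $1$ in $\chi_{A|_V}$ and multiplicity $0$ in the cofactor. By Galois invariance $\sigma(\lambda)$ has the same multiplicities, so it is a simple eigenvalue of $A$ whose one-dimensional eigenspace is contained in $V$. The identical argument applied to $A'$, $V'$, and $H^1_{rel}$ yields the relative statement.

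I expect the only delicate point to be the verification that $\chi_{A'|_{V'}}\in\bk[t]$: the relative cohomology case forces the passage to the finite cover where monodromy pointwise fixes $\Sigma$, so that $\phi$ acts trivially on $\ker(p)$ and the restriction $A'|_{V'}$ visibly decomposes as a trivial part on top of $A|_V$. Once this reduction is secured, no further dynamical input is needed, and the remainder of the proof is a routine Galois-theoretic divisibility computation.
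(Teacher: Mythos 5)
Your proof is correct, and it reaches the conclusion by a somewhat different mechanism than the paper. Both proofs begin identically: apply the Closing Lemma to get a closed $g_t$--orbit, take $A$ and $A'$ to be the pseudo-Anosov action on $H^1$ and $H^1_{rel}$, use the Perron--Frobenius-type simplicity of $\lambda=e^T$ from Section \ref{S:closed}, and exploit the block-triangular form of $A'$ (after fixing $\Sigma$ pointwise) to pass from the absolute to the relative statement. The divergence is in how one establishes that $\sigma(\lambda)$ is a simple eigenvalue with eigenvector in $V$. The paper invokes Lemma \ref{L:bour} (Bourbaki: semisimple representations in characteristic zero are determined by their characters) to conclude that any $\sigma$ fixing the trace field carries the subrepresentation $V$ to an isomorphic copy of itself, so $\sigma(A|_V)$ is conjugate to $A|_V$. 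You instead show directly that $\chi_{A|_V}\in\bk[t]$ via Newton's identities applied to the power sums $\mathrm{tr}(A^n|_V)\in\bk$, then run a divisibility argument in $\bk[t]$ with the cofactor $\chi_A/\chi_{A|_V}$. Your route is more elementary and self-contained for this particular lemma, since it avoids the Bourbaki input; the paper's choice is natural because the stronger statement (that $\sigma$ preserves the isomorphism class of the whole subrepresentation, not merely the characteristic polynomial of one monodromy element) is reused in the very next lemma to define the Galois-conjugate bundles $V_\rho$ and obtain the decomposition $H^1=\left(\oplus_\rho V_\rho\right)\oplus W$. One cosmetic slip: with the paper's conventions $\Re(\omega)$ is the eigenvector for $e^{-T}$ (see the identity $\Re(w)=e^T\phi^*\Re(w)$ in Section \ref{S:simple}), so the eigenvector for your $\lambda=e^T$ is $\Im(\omega)$; this is harmless since both classes lie in $T(\cM)$.
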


We emphasize that by simple eigenvalue, we mean that the generalized eigenspace is one-dimensional. 

\begin{proof}
Suppose $(X,\omega)\in \cM$ generates a closed $g_t$--orbit, so $g_T(X,\omega)=(X,\omega)$, and let $A$ be the action of $g_T$ on cohomology. In other words, $A$ is a monodromy matrix for the bundle $H^1$. Let $\lambda$ be the eigenvalue of maximal modulus of $A$. We have already observed that $\lambda$ is unique and simple (Section \ref{SS:act}). Furthermore we have observed that the eigenvector of $\lambda$ lies in $V$. 

By Lemma \ref{L:bour} every Galois automorphism that acts as the identity on the trace field must send the irreducible representation $V$ to a conjugate (isomorphic) irreducible representation; that is, these Galois automorphisms send the representation $V$ to itself.

Hence, for any $\sigma$ as in the Lemma, it follows that $\sigma(A|_{V})$ is similar to the matrix $A|_{V}$ (this is the matrix $A$ restricted to the subspace $V$), and consequently $\sigma(\lambda)$ is a simple eigenvalue of $A|_{V}$.

Recall that in a basis adapted to $\ker p$, (possibly after replacing $A'$ with a finite power) the matrix $A'$ has the form 
\[A'=\left(\begin{array}{cc} \Id & * \\ 0 & A \end{array}\right).\]
Since each $\sigma(\lambda)$ is a simple eigenvalue for $A$, it is also a simple eigenvalue for $A'$.

The restriction $A'|_{V'}$ of $A'$ to $V'$ has a similar block upper triangular form, 
\[A'|_{V'}=\left(\begin{array}{cc} \Id & * \\ 0 & A|_V \end{array}\right),\]
where the identity block is the identity on $V'\cap \ker(p)$. 
Since each $\sigma(\lambda)$ is an eigenvalue for $A|_{V}$, it is also an eigenvalue for $A'|_{V'}$. In other words, the eigenvector for the eigenvalue $\sigma(\lambda)$ of $A'$ lies in $V'$. 
\end{proof}

\begin{lem}
Let $A'$ be as in the previous lemma, and $E\subset V'$ denote the span of the eigenvectors of the $\sigma(\lambda)$. Then $E$ is defined over the trace field of $V$. In particular, $V'$ contains vectors with coordinates in $\bk$. 
\end{lem}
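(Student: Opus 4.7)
The plan is to realize $E$ as the kernel of a matrix with entries in $\bk$. Fix an integer basis of $H^1(X,\Sigma;\bZ)$; since monodromy preserves the integral lattice, the matrix $A'$ has entries in $\bZ$, and in particular in $\bk$. Let $m_\lambda(x)\in\bk[x]$ denote the minimal polynomial of $\lambda$ over $\bk$. Its roots are precisely the distinct Galois conjugates of $\lambda$ under $\operatorname{Aut}(\bC/\bk)$, which by the previous lemma are exactly the distinct values appearing in the family $\{\sigma(\lambda)\}$.

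Next, I will identify $E$ with $\ker(m_\lambda(A'))$. By the previous lemma every root $\mu$ of $m_\lambda$ is a simple eigenvalue of $A'$ (so its generalized eigenspace is just $\ker(A'-\mu I)$), and $m_\lambda$ has $\mu$ as a simple root, so each eigenvector for $\mu$ lies in $\ker(m_\lambda(A'))$. Conversely, $\ker(m_\lambda(A'))$ decomposes as the direct sum of its intersections with the generalized eigenspaces of $A'$; only those corresponding to roots of $m_\lambda$ contribute, and on each such (simple) eigenspace $m_\lambda(A')$ acts by $0$. Hence $\ker(m_\lambda(A'))$ equals the span of the eigenvectors for the $\sigma(\lambda)$, which is $E$.

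Since $A'$ and $m_\lambda$ both have coefficients in $\bk$, the matrix $m_\lambda(A')$ lies in $M_n(\bk)$, so its $\bk$-kernel $E_\bk\subset\bk^n$ satisfies $E_\bk\otimes_\bk\bC=E$. In particular $E$ admits a basis with entries in $\bk$, i.e.\ $E$ is defined over $\bk$. The previous lemma already places each eigenvector of a $\sigma(\lambda)$ inside $V'$, so $E\subset V'$, and therefore $V'$ contains nonzero vectors with coordinates in $\bk$.

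The only potentially delicate point is the matching between the set of eigenvalues $\{\sigma(\lambda)\}$ produced by the previous lemma and the Galois orbit of $\lambda$ over $\bk$: any $\sigma\in\operatorname{Aut}(\bC/\bk)$ restricts to an element of $\operatorname{Gal}(\overline{\bk}/\bk)$, and any such Galois automorphism extends to $\bC$, so the two notions of ``conjugate'' coincide. Everything else is standard linear algebra over $\bk$, so I do not anticipate any serious obstacle.
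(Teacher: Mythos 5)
Your proof is correct and takes essentially the same approach as the paper: both construct the separable polynomial over $\bk$ whose roots are exactly the $\sigma(\lambda)$ (your $m_\lambda$ is the paper's $g(x)=\prod(x-\lambda_i)$), identify $E$ with $\ker(g(A'))$ using the simplicity of those eigenvalues, and conclude that $E$ is defined over $\bk$ because $g(A')$ has entries in $\bk$. The only cosmetic difference is that the paper packages the final step via the Primary Decomposition Theorem (producing a $\bk$-coefficient polynomial projection onto $E$) whereas you directly observe that the kernel of a matrix with entries in $\bk$ is defined over $\bk$; both are standard and equivalent.
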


\begin{proof}
Let $\{\lambda=\lambda_1, \ldots, \lambda_k\}$ be the set of $\sigma(\lambda)$, where $\sigma$ is a field automorphism of $\bC$ acting trivially on $\bk$. Define the polynomial $g(x)=\prod_{i=1}^k (x-\lambda_i)$, and note that $g(x)\in \bk[x]$ has coefficients in the trace field. 

Let $f(x)$ denote the characteristic polynomial of $A'$. Then $g(x)$ divides $f(x)$, and moreover since the $\lambda_i$ are simple eigenvalues $g(x)$ is coprime to $f(x)/g(x)$.

The Primary Decomposition Theorem gives that there is a projection onto  $\ker(g(A'))=E$ which is a polynomial in $A'$ with coefficients in $\bk$. In particular, since $A'$ is an integer matrix, this projection has matrix coefficients in $\bk$, and hence the image is defined over $\bk$. 
\end{proof}

\begin{proof}[\textbf{\emph{Proof of Proposition \ref{P:galois}}}] 
We can fix a basis for $V\subset \bC^{2g}$ of vectors with coefficients in $\bk(\cM)$. Since the action on $\bC^{2g}$ is via integer matrices, it follows that the representation $V$ can be defined by matrices with coefficients in $\bk(\cM)$. In particular, it is clear that the trace field of $V$ is a subfield of $\bk(\cM)$. 

The previous lemma gives that $V'$ contains a point $v'$ with coordinates in the trace field $\bk$ of $V$. By the Simplicity Theorem, any invariant subspace of $V'$ is contained in $\ker(p)$. The point $v'$ is not contained in $\ker(p)$, so its orbit under the monodromy representation spans $V'$. Since the monodromy of $H^1_{rel}$ is integral, it follows that $V'$ is spanned by points with coordinates in the trace field of $V$. Hence $V'$ is defined over the trace field, that is $\bk=\bk(\cM)$.

This gives that $\bk(\cM)$ is equal to the trace field of $V$. It is easy to see that the field of definition of $p(T(\cM))$ contains the trace field of $V$ and is contained in $\bk(\cM)$. 
\end{proof}

\begin{lem}
For each embedding $\rho$ of $\bk(\cM)$ into $\bC$, there is a simple subbundle $V_\rho$ of $H^1$. These subbundles are non-isomorphic, and we have that 
\[H^1=\left(\bigoplus_\rho V_\rho\right) \oplus W,\]
 where $W$ is semisimple and does not contain any copies of the $V_\rho$. Both $W$ and $\oplus V_\rho$ are defined over $\bQ$.

 In particular, 
\[\dim_\bC p(T(\cM)) \cdot \deg_\bQ \bk(\cM) \leq 2g.\]
\end{lem}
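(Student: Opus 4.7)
The plan is to construct the $V_\rho$ as Galois conjugates of $V := V_{\Id} = p(T(\cM))$, show that each one occurs in $H^1$ with multiplicity exactly one, and read off the decomposition from the isotypic decomposition provided by Theorem~\ref{T:AEM}. The dynamical input (the Closing Lemma) is needed only in the multiplicity-one step; everything else is Galois descent.

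Set $\bk := \bk(\cM)$. For each embedding $\rho \colon \bk \to \bC$, extend $\rho$ to an automorphism $\sigma$ of $\bC$ fixing $\bQ$ and set $V_\rho := \sigma(V) \subset H^1$, where $\sigma$ acts on $H^1 \simeq H^1_\bZ \otimes_\bZ \bC$ via its action on scalars. Because $V$ is defined over $\bk$ by Proposition~\ref{P:galois}, $V_\rho$ depends only on $\rho$, not on the choice of $\sigma$. The monodromy of $H^1$ has image in $GL(2g,\bZ)$ and hence commutes with $\sigma$, so $V_\rho$ is a flat $\bC$-subbundle; any invariant subspace of $V_\rho$ would pull back through $\sigma^{-1}$ to an invariant subspace of the simple bundle $V$, so $V_\rho$ is itself simple. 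The character of $V_\rho$ equals $\rho \circ \chi_V$; since the values of $\chi_V$ generate $\bk$ over $\bQ$ by Proposition~\ref{P:galois}, Lemma~\ref{L:bour} forces any isomorphism $V_\rho \cong V_{\rho'}$ to satisfy $\rho = \rho'$. Thus the $V_\rho$ are pairwise non-isomorphic.

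The principal step is to show that each $V_\rho$ has multiplicity exactly one in $H^1$. By the Closing Lemma, pick $(X,\omega) \in \cM$ on a closed $g_t$--orbit of period $T$. By Section~\ref{SS:act}, $e^T$ is a \emph{simple} eigenvalue of $g_T^*$ acting on $H^1(X,\bC)$, with eigenvector $[\Re \omega]$ lying in $V = p(T(\cM))$. If $H^1$ contained a second flat subbundle $V' \cong V$, then transporting $[\Re\omega]$ through any $\pi_1$--intertwiner $V \to V'$ would produce a linearly independent $e^T$-eigenvector in $V'$, contradicting simplicity. Hence $V_{\Id}$ has multiplicity one in $H^1$; applying $\sigma$ preserves multiplicities (it sends $V_{\Id}$-isotypic subspaces to $V_\rho$-isotypic subspaces and vice versa), so each $V_\rho$ has multiplicity one. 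This is the main obstacle in the argument, because it is the step that requires genuinely dynamical information rather than formal Galois manipulations.

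Combining multiplicity one with the isotypic decomposition furnished by Theorem~\ref{T:AEM} yields $H^1_{[V_\rho]} = V_\rho$ for each $\rho$, and distinct isotypic components are linearly independent, so
\[H^1 = \left(\bigoplus_\rho V_\rho\right) \oplus W, \qquad W := \bigoplus_{[U] \notin \{[V_\rho]\}} H^1_{[U]}.\]
Both summands are preserved by every automorphism of $\bC$ fixing $\bQ$: the set $\{[V_\rho]\}$ forms a single Galois orbit of isomorphism classes, and $W$ is the sum of the remaining isotypic components. Since $H^1$ carries a $\bQ$-structure coming from its integer lattice, any Galois-invariant $\bC$-subspace descends to $\bQ$, giving the required rationality of both summands. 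The dimension inequality is then immediate: $\bk \subset \bR$ has exactly $\deg_\bQ \bk$ embeddings into $\bC$, each $V_\rho$ has $\bC$-dimension $\dim_\bC p(T(\cM))$, and their direct sum sits inside $H^1$, giving $\deg_\bQ \bk(\cM) \cdot \dim_\bC p(T(\cM)) \leq 2g$.
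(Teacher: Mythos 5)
Your proposal is correct and follows essentially the same route as the paper's proof: define $V_\rho = \sigma_\rho(V)$ via an automorphism of $\bC$ extending $\rho$, use integrality of the monodromy to see the $V_\rho$ are flat, simple, and Galois-conjugate subrepresentations of equal multiplicity, use the trace field computation plus Lemma~\ref{L:bour} to see they are pairwise non-isomorphic, and use the simple eigenvalue $e^T$ on a closed $g_t$--orbit to pin the multiplicity at one. You spell out the multiplicity-one argument (transporting the eigenvector through a hypothetical intertwiner) and the $\bQ$-rationality of the two summands (Galois descent from the integral lattice) in slightly more detail than the paper, but the underlying mechanism is identical.
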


This lemma is exactly Theorem \ref{T:galois} in the language of representations. To translate, set $\bV_\rho$ to be the flat subbundle corresponding to the representation $V_\rho$, etc. 

\begin{proof}
Each embedding $\rho$ of $\bk(\cM)$ into $\bC$ can be written as the identity embedding composed with some field automorphism $\sigma_\rho\in Gal_\bQ(\bC)$. We set $V_\rho=\sigma_\rho(V)$. Since the representation $\pi_1(\cM)$ to $H^1$ can be defined with integer matrices, it follows that all $V_\rho$ appear in this representation. Furthermore, since $\bk(\cM)$ is the trace field of $V$, all these representations $V_\rho$ have different characters and are hence non-isomorphic. They are simple because a simple representation composed with a field automorphism is again a simple representation. 

Again because the representation $\pi_1(\cM)$ to $H^1$ is integral, we have that the multiplicity of each $V_\rho$ in $H^1$ is the same. The multiplicity of $V$ must be one because there are matrices in the representations coming from closed $g_t$--orbits which have a simple eigenvalue whose eigenvector lies in $V$. 

There are $\deg_\bQ \bk(\cM)$ field embeddings of $\bk(\cM)$ into $\bC$, so the final inequality follows from a dimension count: 
\[\sum_\rho \dim_\bC V_\rho \leq \dim_\bC H^1.\]
\end{proof}


\section{Calculation of the field of definition}\label{S:hol}

\begin{proof}[\textbf{\emph{Proof of Theorem \ref{T:main}}}.]
If $\cM$ is an affine invariant submanifold defined over $\bk$, then $\cM$ contains translation surfaces with period coordinates in $\bk[i]$. This is simply linear algebra: linear equations with coefficients in $\bk$ can be solved over $\bk$. Any translation surface with period coordinates in $\bk[i]$ has holonomy field contained in $\bk$. This shows that $\bk(\cM)$ contains the intersection of the holonomy fields; it remains to show that $\bk(\cM)$ is contained in the holonomy field of each translation surface in $\cM$. 

If a translation surface has holonomy field $\bk$, then up to scaling, something in its $SL(2,\bR)$--orbit has absolute period coordinates in $\bk[i]$. So we will show that if $\cM$ contains a translation surface $(X,\omega)$ with absolute period coordinates in $\bk[i]$, then $\bk(\cM)\subset \bk$. 

The point $p(X,\omega)$ represents a point in $V_{\Id}=V$ with coordinates in $\bk[i]$. By the simplicity of $p(T(\cM))$, the orbit of this point under the monodromy representation spans $V_{\Id}$. Since the monodromy is integral, we see that $V_{\Id}$ is spanned by points with coordinates in $\bk[i]$. It follows that $V_{\Id}$ is defined over $\bk[i]$, that is, $p(\cM)$ is defined over $\bk$. By Theorem \ref{T:galois}, it follows that $\cM$ is defined over $\bk$. 

The bound on the degree of $\bk(\cM)$ follows from the inequality in Theorem \ref{T:galois}. (The interested reader may discover an alternate way to conclude that the degree of the field of definition is at most the genus, using the Closing Lemma and the Simplicity Theorem directly.) 
\end{proof}


\section{Limits of algebraically primitive Teichm\"uller curves}\label{S:curves}


\begin{cor}
Suppose $\cM$ is contained in the minimal stratum in prime genus, and that $\cM$ properly contains an algebraically primitive Teichm\"uller curve. Then $\cM$ is equal to a connected component of the stratum. 
\end{cor}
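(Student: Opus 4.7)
My plan is to case-split on the field of definition $\bk(\cM)$. Let $\cN \subset \cM$ denote the algebraically primitive Teichm\"uller curve, so $\dim_\bC \cN = 2$ and $\deg_\bQ \bk(\cN) = g$. Since $\cN \subset \cM$, intersecting holonomy fields over the smaller set $\cN$ yields a larger field, so Theorem \ref{T:main} gives $\bk(\cM) \subset \bk(\cN)$. Because $g$ is prime, the only possibilities are $\bk(\cM) = \bk(\cN)$ or $\bk(\cM) = \bQ$.

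I would rule out $\bk(\cM) = \bk(\cN)$ by a dimension count. The inequality in Theorem \ref{T:galois} yields $\dim_\bC p(T(\cM)) \leq 2g/g = 2$. In the minimal stratum $\cH(2g-2)$ the singular set $\Sigma$ is a single point, so the long exact sequence of the pair $(X,\Sigma)$ shows that $p:H^1_{rel} \to H^1$ is an isomorphism; hence $\dim_\bC \cM = \dim_\bC p(T(\cM)) \leq 2$, contradicting the fact that $\cM$ properly contains the two-dimensional $\cN$.

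The core of the argument is the case $\bk(\cM) = \bQ$. Here $p(T(\cM))$ is, by Theorem \ref{T:galois}, a flat subbundle of $H^1$ over $\cM$ defined over $\bQ$, and I would restrict it to $\cN$ and show it must equal all of $H^1|_\cN$. Apply Theorem \ref{T:galois} to $\cN$: algebraic primitivity gives $\dim V_{\Id}^\cN = 2$ and $\deg_\bQ \bk(\cN) = g$, so the simple summands $V_\rho^\cN$ have total dimension $2g$, forcing $W^\cN = 0$ and $H^1|_\cN = \bigoplus_\rho V_\rho^\cN$. The $V_\rho^\cN$ are pairwise non-isomorphic simple $\pi_1(\cN)$-representations, so any flat subbundle of $H^1|_\cN$ has the form $\bigoplus_{\rho \in S} V_\rho^\cN$ for some subset $S$ of the embeddings $\bk(\cN) \hookrightarrow \bC$. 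Being defined over $\bQ$ requires $S$ to be stable under $\mathrm{Gal}(\bC/\bQ)$; since this action on embeddings of a number field is transitive, $S$ is either empty or everything. Since $p(T(\cM))|_\cN \supset p(T(\cN)) = V_{\Id}^\cN \neq 0$, we must have $p(T(\cM))|_\cN = H^1|_\cN$.

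To conclude, $p(T(\cM))|_\cN$ having rank $2g$ forces the flat bundle $p(T(\cM))$ to have rank $2g$ on all of $\cM$, so $p(T(\cM)) = H^1$. Using once more that $p$ is an isomorphism in the minimal stratum, $T(\cM) = H^1_{rel}$ and $\dim_\bC \cM = 2g = \dim_\bC \cH$, so $\cM$ is open in $\cH$ and hence a connected component. The main obstacle I anticipate is the Galois-theoretic step: verifying cleanly that a $\bQ$-defined flat subbundle of $H^1$ over $\cM$ restricts to a $\bQ$-defined flat subbundle of $H^1|_\cN$, and that this really forces Galois-stability of $S$ inside the $\cN$-decomposition. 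Both should follow from the integrality of the monodromy representation on $H^1(X,\bZ)$, but this is the point that warrants the most care.
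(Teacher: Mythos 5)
Your proof is correct and follows essentially the same strategy as the paper's: use primality of $g$ to case-split on $\deg_\bQ\bk(\cM)$, rule out the degree-$g$ case via the inequality in Theorem \ref{T:galois} combined with the fact that $p$ is an isomorphism in the minimal stratum, and then argue that a $\bQ$-defined flat subbundle of $H^1$ restricted to $\cN$ must, by Galois-transitivity on the decomposition $H^1|_\cN=\bigoplus_\rho V_\rho^\cN$, be all of $H^1|_\cN$. You spell out the Galois step in more detail (in particular noting $W^\cN=0$ for algebraically primitive curves), but the substance and route are the same; the point you flagged as delicate is handled automatically because the field of definition of a flat subbundle is a fiberwise notion, so restriction to $\cN$ does not change it.
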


\begin{proof}
By Theorem \ref{T:main}, $\bk(\cM)$ is contained in the trace field of the algebraically primitive Teichm\"uller curve $\cC\subset \cM$. Since this field has prime degree, either $\bk(\cM)$ has degree equal to the genus, or else $\bk(\cM)=\bQ$. Since the dimension of $\cM$ is greater than 2 (because $\cM$ properly contains a Teichm\"uller curve, and the dimension in question is the complex dimension of the tangent space), the inequality in Theorem \ref{T:galois} forces $\bk(\cM)=\bQ$.

In particular, we have that the representation $\pi_1(\cC)$ to the tangent space of $\cM$ must be stable under $Gal_\bQ(\bC)$. However, the representation $\pi_1(\cC)$ to $H^1$ is the sum of $g$ Galois conjugate irreducible representations, so we see that the tangent space of $\cM$ must be all of $H^1$. It follows that $\cM$ is a connected component of a stratum. 
\end{proof}

\begin{proof}[\textbf{\emph{Proof of Theorem \ref{T:primequi}}}]
The results of Eskin-Mirzakhani-Mohammadi \cite[Thm. 2.3]{EMM} give in particular that given any sequence of distinct closed $SL_2(\bR)$--orbits, some subsequence must equidistribute towards some larger affine invariant submanifold which contains the tail of this subsequence.

For the strata in question, the previous corollary shows that the only affine  invariant submanifold which properly contains an algebraically primitive Teichm\"uller curve is the entire stratum. 
\end{proof}


\bibliography{mybib}{}
\bibliographystyle{amsalpha}
\end{document}